\newtheoremstyle{dotless}{}{}{\itshape}{}{\bfseries}{}{ }{}
  \theoremstyle{dotless}
\newtheorem{theorem}{Theorem}[section] 
\newtheorem{conjecture}[theorem]{Conjecture}
\newtheorem{lemma}[theorem]{Lemma}
\newtheorem{observation}[theorem]{Observation}
\newtheorem{construction}[theorem]{Construction}
\DeclareMathOperator{\sat}{sat}
\title{Linearity of Saturation for Berge Hypergraphs}
\author{Sean English \thanks{Department of Mathematics, 
		Western Michigan University, Kalamazoo MI 49008-5240 USA}\and D\'aniel Gerbner \thanks{Hungarian Academy of Sciences, Alfr\'ed R\'enyi Institute of Mathematics, P.O.B. 127, Budapest H-1364, Hungary. Research supported by the J\'anos Bolyai Research Fellowship of the Hungarian Academy of Sciences and by the National Research, Development and Innovation Office -- NKFIH, grant K 116769.} \and  Abhishek Methuku \thanks{Department of Mathematics, Central European University, Budapest, N\'ador u. 9, 1051 Hungary. Research supported by National Research, Development and Innovation Office -- NKFIH, grant K 116769.} \and Michael Tait\thanks{Department of Mathematical Sciences, Carnegie Mellon University, Pittsburgh, PA, 15213, USA. Supported in part by NSF grant DMS-1606350.}}
\begin{document}
\maketitle

\begin{abstract}
For a graph $F$, we say a hypergraph $H$ is Berge-$F$ if it can be obtained from $F$ be replacing each edge of $F$ with a hyperedge containing it. We say a hypergraph is Berge-$F$-saturated if it does not contain a Berge-$F$, but adding any hyperedge creates a copy of Berge-$F$. The $k$-uniform saturation number of Berge-$F$, $\sat_k(n,\text{Berge-}F)$ is the fewest number of edges in a Berge-$F$-saturated $k$-uniform hypergraph on $n$ vertices. We show that $\sat_k(n,\text{Berge-}F) = O(n)$ for all graphs $F$ and uniformities $3\leq k\leq 5$, partially answering a conjecture of English, Gordon, Graber, Methuku, and Sullivan. We also extend this conjecture to Berge copies of hypergraphs.

\end{abstract}

\section{Introduction}

Given a family of graphs $\mathcal{F}$, a graph $G$ is {\em $\mathcal{F}$-saturated} if it does not contain any $F\in \mathcal{F}$ as a subgraph, but the addition of any edge creates a copy of some $F\in \mathcal{F}$. Thus, the maximum number of edges in an $\mathcal{F}$-saturated graph is the Tur\'an number for $\mathcal{F}$, denoted by $\mathrm{ex}(n, \mathcal{F})$. The study of Tur\'an numbers for various families of graphs is a cornerstone of extremal combinatorics, c.f. \cite{K, S} for surveys. On the other end of the spectrum, we define the {\em saturation number} of $\mathcal{F}$ to be the minimum number of edges in an $\mathcal{F}$-saturated graph and denote this quantity by $\mathrm{sat}(n, \mathcal{F})$. Saturation numbers were first studied by Erd\H{o}s, Hajnal, and Moon \cite{EHM} and since then have been researched extensively. K\'aszonyi and Tuza \cite{KT} showed that saturation numbers are always linear. That is, for any finite family $\mathcal{F}$ of graphs, there is a constant $C$ such that $\mathrm{sat}(n, \mathcal{F}) \leq Cn$. For more results on graph saturation, we refer the reader to the survey \cite{FFS}. 

Graph saturation has been generalized in several natural ways, including studying other host graphs besides the complete graph \cite{FJPW, KS}, adding edge colors \cite{BFVW, HT}, unique and weak saturation \cite{BS, B}, and the study of the saturation spectrum \cite{Thomas2}. In this paper, we are interested in considering saturation numbers of hypergraphs.

Given a family of $k$-uniform hypergraphs $\mathcal{F}$ and a $k$-uniform hypergraph ${H}$, we say that ${H}$ is $\mathcal{F}$-saturated if ${H}$ is $\mathcal{F}$-free but the addition of any $k$-edge creates a copy of some hypergraph in $\mathcal{F}$. We denote the minimum number of hyperedges in an $\mathcal{F}$-saturated graph by $\mathrm{sat}_k(n, \mathcal{F})$. Complementing the result of K\'aszonyi and Tuza, Pikhurko \cite{P} showed that for any finite family $\mathcal{F}$ of $k$-uniform hypergraphs, one has $\mathrm{sat}_k(n, \mathcal{F}) = O(n^{k-1})$.


Extending theorems in extremal graph theory to hypergraphs is a notoriously difficult problem in general. However, recent attempts to put graph structure on a hypergraph extremal problem have been successful in making problems tractable. Given a graph $F$ and a hypergraph ${H}$ on the same vertex set, we say that ${H}$ is {\em Berge-$F$} if there is a bijection $\phi: E(F) \to E({H})$ such that $e\subseteq \phi(e)$ for all $e\in E(F)$. That is, each edge of $F$ can be expanded to a unique hyperedge of ${H}$, or alternatively each hyperedge of ${H}$ may be shrunk down to a unique edge in $F$. Note that this generalizes the well-studied concept of a Berge cycle and a Berge path. Also note that for a graph $F$ there are in general many non-isomorphic hypergraphs which are Berge-$F$. We denote by $\mathcal{B}_k(F)$ the family of all $k$-uniform hypergraphs which are Berge-$F$.  

Given a graph $F$ the $k$-uniform {\em expansion} of $F$ is the hypergraph $F^+$ obtained from $F$ by enlarging each edge in $F$ with $k-2$ new vertices where distinct edges are enlarged by distinct vertices. Note that $F^+$ is a Berge-$F$. The study of Tur\'an numbers of $F^+$ (c.f. \cite{KMV1, KMV2, KMV3, M, P2}) or Berge-$F$ (c.f. \cite{GP, GKL, GL, GMT, LV, T, GMV}) for various graphs $F$ has received quite a lot of attention recently. 

In this paper, we study saturation numbers for Berge-$F$. For convenience, we will let $\sat_k(n,\text{Berge-}F)=\sat_k(n,\mathcal{B}_k(F))$. Bounds on the saturation numbers for many common classes of graphs were given in \cite{GRWCsat}. In addition, the authors conjectured that these numbers will always grow linearly in $n$, regardless of the uniformity or the graph $F$. Our main theorem confirms this conjecture for uniformities $k\in \{3,4,5\}$. This suggests that the saturation problem for the family Berge-$F$ behaves more like a family of graphs than one might expect from Pikhurko's general upper bound.

\begin{theorem}\label{theorem main}
	For any graph $F$ and any $k$ with $3\leq k\leq 5$, we have
	\[
	\sat_k(n,\text{Berge-}F)=O(n).
	\]
\end{theorem}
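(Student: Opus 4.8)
The plan is to construct, for each graph $F$ and each uniformity $k \in \{3,4,5\}$, an explicit Berge-$F$-saturated $k$-uniform hypergraph $H$ on $n$ vertices with $O(n)$ edges. The guiding principle is that ``Berge-saturation'' is a very weak condition: to be saturated we need only ensure (i) $H$ contains no Berge-$F$, and (ii) every potential new $k$-edge $e \notin E(H)$, once added, completes a Berge-$F$. Because a Berge-$F$ needs $|E(F)|$ distinct hyperedges that can be shrunk to form $F$, condition (ii) can often be satisfied cheaply: we build a small ``reservoir'' gadget of constantly many hyperedges that already forms almost all of some Berge-$F$ (a copy of $F$ minus one edge, say), so that essentially any added hyperedge $e$ can play the role of the missing edge. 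The difficulty is doing this while simultaneously guaranteeing (i), that no Berge-$F$ is present before the addition.

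\medskip

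Concretely, I would first reduce to a convenient target. Let $F$ have $m=|E(F)|$ edges. I would try to place a bounded-size core $C$ on $O(1)$ vertices that contains a Berge copy of $F$ minus an edge but no full Berge-$F$, and then attach the remaining $n - O(1)$ vertices so cheaply that they contribute only $O(n)$ hyperedges total while not creating any Berge-$F$. The key observation making $k \le 5$ special is a counting/degrees-of-freedom argument: a $k$-edge has $k$ vertices, and when $k$ is small each added hyperedge can only ``cover'' few of the edges of $F$ at once, so a single cleverly designed core together with any one extra hyperedge suffices to realize $F$, whereas for larger $k$ a single hyperedge could be forced to absorb too many edges of $F$ and the simple construction breaks. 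I would first handle $k=3$ as the base case, where each hyperedge shrinks to a single graph edge (three choices of which pair), then push the same gadget idea through $k=4$ and $k=5$, checking that the extra one or two ``padding'' vertices per hyperedge never allow an unintended Berge-$F$ to form by reusing core vertices.

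\medskip

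The main obstacle, and the step I expect to consume most of the work, is verifying condition (i): that the padded construction is genuinely Berge-$F$-\emph{free}. Adding $\Theta(n)$ sparse hyperedges to a dense constant-size core creates many incidences, and one must rule out that some Berge-$F$ assembles itself using a mixture of core hyperedges and padding hyperedges via the bijection $\phi$. This is where the restriction $k \le 5$ is used most critically: I would argue that any hypothetical Berge-$F$ must use a bounded number of vertices (namely $|V(F)|$), so it can only touch boundedly many padding hyperedges, and then a finite case analysis on how the shrinkings $\phi(e) \supseteq e$ can interact shows no valid bijection $E(F) \to E(H)$ exists. I would isolate this as a separate structural lemma: that the padding vertices have too low ``effective degree'' in the shrunk graph to supply the edges $F$ needs. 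Establishing that clean degree bound, and confirming it fails to extend past $k=5$ (consistent with the theorem's stated range), is the crux; the edge count $O(n)$ and the saturation direction (ii) should then follow by direct inspection of the gadget.
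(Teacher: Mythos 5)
There is a genuine gap, and it sits exactly where you postponed the work. Your plan is to exhibit an \emph{explicitly} Berge-$F$-saturated hypergraph: a constant-size core containing a Berge copy of $F$ minus an edge, plus $O(n)$ ``padding'' hyperedges, with condition (ii) (every non-edge completes a Berge-$F$) claimed to follow ``by direct inspection of the gadget.'' It does not. Consider a non-edge $e$ lying entirely inside the padding vertices. For $H+e$ to contain a Berge-$F$, two vertices of $e$ must play the roles of the endpoints $u,v$ of the edge of $F$ assigned to $e$, and those two padding vertices must already lie in $\deg_F(u)-1$ and $\deg_F(v)-1$ \emph{other} hyperedges that can witness the remaining edges of $F$ at $u$ and $v$ (and the rest of $F$ must be witnessed by further distinct hyperedges, since $\phi$ is a bijection). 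So the padding cannot be an arbitrary sparse attachment to a clever core: essentially every vertex, and in fact every $k$-tuple of vertices, must be wired into the core in a controlled way, and that wiring is precisely what threatens condition (i). Designing it and verifying Berge-$F$-freeness \emph{is} the whole problem, and your proposal contains no construction for it. The paper sidesteps this difficulty entirely: it never builds a saturated hypergraph explicitly. Instead it builds a Berge-$F$-\emph{free} hypergraph $H_k(n,a,F^*,S)$ from a vertex feedback set $S$ of (a component of) $F$ --- a constant-size part carrying a Berge-$F^*[S]$, plus $\Theta(n)$ disjoint groups of $a$ vertices, each group joined to all $(k-a)$-subsets of the feedback part --- and then proves that \emph{any} greedy completion of this hypergraph to a saturated one adds only $O(n)$ edges: the added edges cannot contain a Berge copy of the forest $F^*-S$ with core vertices in distinct groups (else a Berge-$F$ would already appear), and this forces the added edge set to be $d$-degenerate for a constant $d$, hence of size $O(n)$. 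This ``free construction plus degenerate greedy completion'' device is the key idea missing from your outline, and without it (or a genuine substitute) the proposal does not close.

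A second, smaller error: your explanation of why $k\leq 5$ matters misreads the Berge condition. Since $\phi:E(F)\to E(H)$ is a bijection, each hyperedge covers exactly one edge of $F$; no hyperedge is ever ``forced to absorb too many edges of $F$,'' for any $k$. The actual source of the restriction in the paper is a case analysis on structural parameters of $F$: the case $\beta(F)\geq k+1$ (vertex cover number large) is handled uniformly for all $k$ by one theorem, and the remaining cases are split according to the feedback number $f(F)\in\{0,1,2,3,4\}$ with $\beta(F)\leq k\leq 5$, each requiring a separate argument about which Berge cycles the construction can contain; for $k\geq 6$ this case analysis becomes intractable. Nothing in your degrees-of-freedom heuristic reproduces, or substitutes for, that analysis.
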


\subsection{Definitions, notation, and organization}\label{section notation}

Here we provide some basic definitions and notation that will be used throughout the paper. Given a graph $G$ on vertex set $V$, and a set $S \subset V$ of vertices, let $G[S]$ denote the subgraph of $G$ induced by $S$, and let $G - S$ denote the subgraph of $G$ induced by $V \setminus S$.  For convenience, we will call a connected component of a graph as simply a ``component".

We will assume that any forbidden graph $F$ does not have any isolated vertices. We do not lose anything by doing so, because if $F$ did have isolated vertices, all that might change is that a Berge-$F$ may also necessarily contain some isolated vertices, but since we are concerned with the saturation function for large $n$, any constant number of isolated vertices in a Berge-$F$ will not affect which hypergraphs on $n$ vertices are $\mathcal{B}_k(F)$-saturated.

For a graph $G$, let $\beta(G)$ denote the vertex cover number of $G$, i.e., the size of a smallest vertex set $A\subset V(G)$ such that every edge $e\in E(G)$ is incident with a vertex of $A$. A \emph{vertex feedback set} is a set of vertices $S$ such that $G-S$ is acyclic. The cardinality of a smallest vertex feedback set is the feedback number of $G$, denoted by $f(G)$. 

A $k$-uniform hypergraph $H$ on $n$ vertices is called $d$-degenerate if there is an ordering of the vertex set $V(H)=\{v_1,\dots,v_n\}$ such that $v_\ell$ has degree less than or equal to $d$ in the subhypergraph of $H$ induced by $V(H) \setminus \{v_1,\dots,v_{\ell-1}\}$. 
Note that if $H$ is $d$-degenerate, then $|E(H)|\leq dn$.

Suppose a hypergraph $H$ is Berge-$F$. If we embed $F$ on the vertex set of $H$ in such a way that there exists a bijection $\phi:E(F)\to E(H)$ such that $e\in \phi(e)$ for each $e\in E(F)$, then we will say that $F$ \emph{witnesses} $H$ and we call the set of vertices of $F$ in this embedding, the {\em core vertices}. By the definition of Berge-$F$, there is always at least one such embedding.

\vspace{2mm}

\textbf{Organization of the paper:} In Section \ref{section linearity}, we prove our main theorem, Theorem \ref{theorem main}. In Section \ref{section conjecture} we formulate a general conjecture about how the saturation function should behave for families of Berge hypergraphs. We show that this conjecture would be best possible if true.


\section{Proof of Theorem \ref{theorem main}}\label{section linearity}

\subsection{Proof Sketch}
The main idea of the proof involves the use of Construction \ref{construction HnaFS}, a hypergraph with few edges that is constructed based on a feedback set of the forbidden graph $F$. While we do not usually expect the construction to be Berge-$F$-saturated, we show that if the construction does not contain Berge-$F$, we can greedily add at most a linear number of edges to create a Berge-$F$-saturated hypergraph. When using this construction, some issues arise in certain cases if the forbidden graph $F$ is not connected. As such, we use two different results, one where the construction does not contain a Berge-$F$, and one where the construction does not contain a Berge copy of any connected component of $F$. 

With this in hand, the rest of the proof involves strategic use of this construction, depending on the vertex cover number, $\beta(F)$ and the vertex feedback number, $f(F)$. Most of the work is done by Theorem \ref{theorem beta>k}, which deals with any graph $F$ such that $\beta(F) >k$. From here, to show linearity for $k\in\{3,4,5\}$, we need only deal with a few cases depending on $f(F)$ and $\beta(F)$ when Theorem \ref{theorem beta>k} does not apply. 
\subsection{Proof}

Our main tool involves the following construction on a vertex set $V$ of size $n$ (where $n$ is sufficiently large):

\begin{construction}\label{construction HnaFS}
	Let $G$ be a graph and let $S$ be a vertex feedback set of $G$, and let $|E(G[S])|=\ell$. Let the vertices of $V$ be partitioned into three sets $V=V_1\cup V_2\cup V_3$, where $|V_1|=f(G)$ (recall that $f(G)$ denotes the vertex feedback number of $G$), and $|V_2|=(k-2)\ell$. We first add $\ell$ hyperedges between $V_1$ and $V_2$ to create a Berge-$G[S]$ with core vertices in $V_1$: To do this, arbitrarily label the vertices in $V_1$ with labels from the vertex set of $G[S]$, and then for each edge $uv$ of $G[S]$, add a $k$-edge that consists of the vertices labeled $u$ and $v$ in $V_1$, and $k-2$ vertices in $V_2$ in such a way that after all the $\ell$ edges are added, each vertex in $V_2$ has degree $1$ (i.e., for each edge $uv$, we choose some $k-2$ vertices in $V_2$ that were not chosen before, and add the hyperedge consisting of $u, v$ and these vertices).
	
	Now, choose some integer $1\leq a\leq k-1$ 
	such that $|S| \geq k-a$. If $a$ does not divide $|V_3|$, arbitrarily choose $(|V_3|\mod a)$ vertices, and remove them to form the set $V_3'\subset V_3$, with $|V_3'|=ra$ for some $r\in\mathbb{Z}$. Partition $V_3'$ into $r$ sets of size $a$, and let $\mathcal{M}$ be the collection of these $a$-sets. For each $a$-set $A$ in $\mathcal{M}$, add all the $\binom{|V_1|}{k-a}$ hyperedges that contain $A$ and some $k-a$ vertices from $V_1$. Call this hypergraph construction $H_k(n,a,G,S)$.
\end{construction}

We use this construction to show linearity of the saturation function in many cases as follows. Depending on the situation, sometimes the graph $G$ in Construction \ref{construction HnaFS} will be the entire forbidden graph $F$, and other times the graph $G$ will only be a collection of connected components of $F$. Due to this, in the following lemmas and theorems, we will let $F^*$ denote the subgraph of $F$ that we use in Construction \ref{construction HnaFS}. 

\begin{lemma}\label{lemma HnaFS construction}
	Let $F$ be a graph, and let $F^*$ be a subgraph of $F$ made up of a collection of components. Let $S$ be a vertex feedback set of $F^*$ and let $1\leq a\leq k-1$ be such that if $|S|\neq 0$, then $|S| > k-a$. Let $H^*$ be a Berge-$(F-F^*)$ and let $z=|V(H^*)|$. If the disjoint union $H_k(n-z,a,F^*,S)\cup H^*$ does not contain a Berge-$F$ (in the case when $F=F^*$, we need only that $H_k(n,a,F,S)$ does not contain a Berge-$F$), then $\sat_k(n,\text{Berge-}F)=O(n)$.
\end{lemma}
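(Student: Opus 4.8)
Let me understand what this lemma claims and how to prove it.The plan is to regard the disjoint union $H_0 := H_k(n-z,a,F^*,S)\cup H^*$ as a Berge-$F$-free ``skeleton'' with only $O(n)$ edges, and to greedily complete it to a saturated hypergraph without blowing up the edge count. First I would count the edges of $H_0$: the Berge-$F^*[S]$ contributes $\ell=|E(F^*[S])|$ edges, the copy $H^*$ of Berge-$(F-F^*)$ contributes $|E(F-F^*)|$ edges, and each of the $r=\FL{|V_3'|/a}=O(n)$ many $a$-sets in $\M$ contributes $\binom{|V_1|}{k-a}=\binom{f(F^*)}{k-a}$ edges; since $f(F^*)$ and $\ell$ depend only on $F$, all but the last total are constants and $|E(H_0)|=O(n)$. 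By hypothesis $H_0$ is Berge-$F$-free, so we may greedily add hyperedges one at a time, as long as no Berge-$F$ is created; this terminates in a Berge-$F$-saturated hypergraph $H_{\mathrm{sat}}\supseteq H_0$. It therefore suffices to prove $|E(H_{\mathrm{sat}})|=O(n)$.

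The core of the argument is the following degree bound, which I would establish for every Berge-$F$-free $H\supseteq H_0$ (in particular for $H_{\mathrm{sat}}$): letting $C:=V_1\cup V_2\cup V(H^*)$, a set of constant size, there is a constant $D=D(F,k)$ with $\deg_H(v)\le D$ for every $v\notin C$. Granting this, the edge count follows at once: every hyperedge either lies inside $C$ (there are at most $\binom{|C|}{k}=O(1)$ of these) or contains some vertex outside $C$, and the number of the latter is at most $\sum_{v\notin C}\deg_H(v)\le D(n-|C|)=O(n)$. Hence $\sat_k(n,\text{Berge-}F)\le|E(H_{\mathrm{sat}})|=O(n)$.

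To prove the degree bound I would argue by contradiction: if some $v\notin C$ had more than $D$ incident hyperedges, I would extract from them (for instance by the sunflower lemma, valid once $D$ is a large enough constant) a controlled ``star'' of hyperedges through $v$ reaching many fresh vertices, and then build an explicit Berge-$F$ inside $H$. The embedding places the feedback-set vertices $S$ onto $V_1$, reusing the pre-built Berge-$F^*[S]$ for the edges inside $S$; it uses the $a$-set gadgets of the construction --- each $a$-set together with its $\binom{|V_1|}{k-a}$ hyperedges to $V_1$ --- to realize the cross edges between $S$ and $V(F^*)\setminus S$, and, when $a\ge 2$, edges inside the acyclic part $F^*-S$; and it uses the star at $v$ together with fresh vertices to route whatever portion of the forest $F^*-S$ remains. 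Crucially, $F^*-S$ is acyclic, so its trees can be embedded greedily outward from their attachment points in $V_1$, and the arithmetic condition ($|S|>k-a$ when $S\ne\emptyset$) guarantees that the gadget hyperedges have enough room in $V_1$ to realize each required edge with a distinct hyperedge. Finally, since $F^*$ is a union of components of $F$ and $H^*$ is a Berge-$(F-F^*)$ on the disjoint vertex set $V(H^*)$, the assembled Berge-$F^*$ (with core disjoint from $V(H^*)$ and hyperedges disjoint from $E(H^*)$) combines with $H^*$ to form a Berge-$F$ in $H$, the desired contradiction.

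I expect the main obstacle to be exactly this last step: verifying that the greedy embedding of $F^*$ can always be completed while keeping the hyperedges pairwise distinct and disjoint from $E(H^*)$, and that it works uniformly across the edge cases --- when $S=\emptyset$ (so $F^*$ is itself a forest and the gadget part of the construction degenerates), when $a=1$ (so a single $a$-set cannot by itself carry a forest edge and all such edges must be routed through $v$ or fresh vertices), and when $F^*$ has several components that must be embedded simultaneously on disjoint cores. Managing the bookkeeping of distinct hyperedges --- ensuring that the $\binom{|V_1|}{k-a}$ copies per $a$-set and the star at $v$ jointly supply enough distinct hyperedges for all $|E(F^*)|$ edges of $F^*$ --- is where the arithmetic hypotheses on $a$, $k$, and $|S|$ must be used carefully, and is the part I would expect to require the most delicate case analysis.
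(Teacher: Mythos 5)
Your outer scaffolding coincides with the paper's: the $O(n)$ edge count of $H_0=H_k(n-z,a,F^*,S)\cup H^*$, the greedy completion of $H_0$ to a Berge-$F$-saturated hypergraph, and the reduction to bounding the number of added edges are all correct and are exactly how the paper begins. The genuine gap is your central claim that there is a constant $D$ such that \emph{every} Berge-$F$-free $H\supseteq H_0$ satisfies $\deg_H(v)\le D$ for all $v\notin C:=V_1\cup V_2\cup V(H^*)$. This claim is false, and the mechanism you propose for exploiting its failure (a sunflower/star of hyperedges through one high-degree vertex, used to route the forest $F^*-S$) cannot work: all hyperedges through a fixed vertex or pair of vertices can only witness graphs in which every edge meets that vertex or pair (plus isolated edges inside petals), so they never contain a Berge copy of a forest with a long path, no matter how many there are. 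Concretely, take $k=3$, let $F=F^*$ be the theta graph consisting of two nonadjacent vertices $b_1,b_2$ joined by three internally disjoint paths of length $7$, let $S=\{b_1,b_2\}$ (a feedback set, so $f(F)=2$), and $a=2$; then the hypotheses of the lemma hold and $H_0=H_3(n,2,F,S)$ is Berge-$F$-free. Writing $V_1=\{s_1,s_2\}$, pick a pair $\{v,w\}\in\mathcal{M}$ and vertices $p_1,\dots,p_N$ in other members of $\mathcal{M}$, and add the $N$ hyperedges $\{v,w,p_i\}$. In the resulting hypergraph every vertex $u\notin B:=\{s_1,s_2,v,w\}$ has all its neighbors in $B$ except possibly its partner in its own member of $\mathcal{M}$; hence any path of length at least $7$ in any witness graph has at least two internal vertices in $B$, so three internally disjoint such paths would require six distinct vertices of $B$, while $|B|=4$. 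Thus no Berge-$F$ is created, yet $\deg(v)\ge N$ with $v\notin C$; moreover any maximal Berge-$F$-free extension of this hypergraph is saturated, contains $H_0$, and still violates your bound. So no sunflower-lemma argument can rescue the claim, whether quantified over all Berge-$F$-free extensions or only over saturated ones.

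What is true, and what the paper proves instead, is a \emph{degeneracy} bound: the hypergraph $H'$ of added edges is $d$-degenerate for a constant $d=d(F,k,a)$, which still yields $|E(H')|\le dn$ but tolerates isolated high-degree vertices such as $v$ above. The point is that producing a Berge copy of $T=F^*-S$ whose core vertices lie in \emph{distinct} members of $\mathcal{M}$ --- which, combined with the Berge-$(F^*[S]\vee\overline{K_s})$ supplied by the gadgets (the join observation you also invoke) and with $H^*$, yields a Berge-$F$ --- requires extending a Berge subtree one edge at a time from \emph{varying} core vertices, and therefore needs every vertex of some subhypergraph of $H'$ to have degree exceeding $d$, not just one vertex. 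Accordingly, the correct contradiction hypothesis is ``$H'$ is not $d$-degenerate,'' i.e.\ after deleting low-degree vertices one is left with a subhypergraph of minimum degree greater than $d$; from that hypothesis the greedy extension of a maximal Berge subtree does go through, because the core vertex being extended itself has more than $d$ incident edges and can reach a fresh vertex in a fresh member of $\mathcal{M}$, avoiding $V(H^*)$, $V_1$, $V_2$, $V_3\setminus V_3'$ and the members of $\mathcal{M}$ already used. Replacing your maximum-degree claim by this degeneracy statement, and otherwise keeping your outline, essentially reconstructs the paper's proof.
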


\begin{proof}
	Assume $H_k(n-z,a,F^*,S)\cup H^*$ is Berge-$F$-free. Let $s=|V(F^*)|-f(F^*)$. First we show that for each choice of $s$ vertices such that each vertex is from a distinct member of $\mathcal{M}$, $H_k(n-z,a,F^*,S)$ contains a Berge copy of $F^*[S]\vee \overline{K_s}$ where the core vertices corresponding to $\overline{K_s}$ are the $s$ chosen vertices, and the core vertices corresponding to $F^*[S]$ are in $V_1$. Indeed, we can use the $\ell$ edges added between $V_1$ and $V_2$ to create the Berge-$F^*[S]$. To create the Berge complete bipartite graph from $S$ to the $s$ vertices, note that if $a=k-1$ then there is a unique way to do this. If $a<k-1$, for each vertex $u\in V_3$, note that $u$ is in a distinct edge with each $(k-a)$-subset of $S$, and so we can easily build a Berge-$K_{1,|S|}$ with center at $u$ and the other core vertices in $|S|$ (for example, if we consider a $(k-a)$-uniform complete graph on vertex set $S$, this clearly contains a tight cycle, and so the edges corresponding to the tight cycle will form the desired Berge star). Note that since our $s$ chosen vertices are from distinct sets in $\mathcal{M}$, we can build a disjoint Berge-$K_{1,|S|}$ for each chosen vertex, giving us the desired complete bipartite graph.
	
	 Let us add edges to $H_k(n-z,a,F^*,S)\cup H^*$ arbitrarily until it is Berge-$F$-saturated. Let $H$ be the resulting hypergraph and let $H'$ be the hypergraph containing all of the added edges. $H'$ cannot contain a Berge copy of the forest $T=F^*-S$ such that each core vertex of the Berge-$T$ is contained in a different element of $\mathcal{M}$, since otherwise $H$ would contain a Berge-$F$. Let
	\[
	d=\binom{z+|V_1|+|V_2|+|V_3|-|V_3'|+(|V(T)|-2)a+a-1}{k-1}
	\]
	We claim that $H'$ is $d$-degenerate. Indeed, let us assume we have removed $b$ vertices $\{v_1,\dots,v_b\}$ from $H'$ such that each time we removed one, it had degree less than or equal to $d$. If the minimum degree of $H'-\{v_1,\dots,v_b\}$ is greater than $d$, we can build a Berge-$T$ such that each core vertex is in a different set of $\mathcal{M}$. To see this, let us assume to the contrary that this is not true, and let $T'$ be the largest subtree of $T$ such that $H'-\{v_1,\dots,v_b\}$ contains a Berge-$T'$ with each core vertex in a different set of $\mathcal{M}$. We will extend this Berge-$T'$ by one edge. Indeed, let $u$ be a core vertex corresponding to a vertex of $T'$ that has lower degree than its counterpart in $T$. Let $A_1,\dots,A_{|V(T')|}\in\mathcal{M}$ be the sets containing the core vertices of $T'$. We wish to find an edge containing $u$ that also contains a vertex $x$ that is not in $V(H^*)$, $V_1$, $V_2$, $V_3\setminus V_3'$, and not in any $A_i$, $1\leq i\leq |V(T')|$. There are no more than $(|V(T)|-2)a+a-1$ other vertices in $\bigcup_{i=1}^{|V(T')|}A_i$, so by our choice of $d$, we know there must be an edge containing a vertex avoiding all the desired sets. This vertex $x$ must be in some new set $A\in \mathcal{M}$, and so we can find a Berge tree larger than $T'$ with the desired properties, contradicting the maximality of $T'$.
	
	Thus, $H'$ is $d$-degenerate. Since $d$ depends only on $F$, $k$ and $a$ ($z\leq k|E(F)|$, $|V_1|=|S|\leq|V(F)|$, $|V_2|=(k-2)\ell$, and $|V_3|-|V_3'|<a$), this gives us that $H'$ has at most $dn=O(n)$ edges. As $H_k(n-z,a,F^*,S)$ also has linearly many edges (all but the $f(F^*)$ vertices in $V_1$ have bounded degree), and $H^*$ has finitely many edges, we have that the Berge-$F$-saturated graph $H$ has linearly many edges.
\end{proof}

We will often choose $F^*$ in the preceding lemma to be a single component. When this is the case, there is an easier condition to guarantee linearity.

\begin{lemma}\label{lemma HnaFS construction 1 component}
	let $F$ be a graph with components $F_1,\dots,F_c$, and let $F^*\in\{F_1,\dots,F_c\}$. For any choice of $S$, $a$ and $H^*$ satisfying the requirements of Lemma \ref{lemma HnaFS construction}, if $H_k(n-z,a,F^*,S)$ is Berge-$F_i$-free for $1\leq i\leq c$, then $\sat_k(n,\text{Berge-}F)=O(n)$.
\end{lemma}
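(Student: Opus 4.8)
The plan is to reduce Lemma~\ref{lemma HnaFS construction 1 component} to Lemma~\ref{lemma HnaFS construction}, whose conclusion already gives $\sat_k(n,\text{Berge-}F)=O(n)$. Since $F^*$ is a single component $F_i$, we have $F-F^*$ equal to the union of the remaining components, and $H^*$ is a fixed Berge-$(F-F^*)$ of bounded size $z=|V(H^*)|$. The hypotheses already guarantee that $S$, $a$, and $H^*$ satisfy the requirements of Lemma~\ref{lemma HnaFS construction}, so all that remains to invoke that lemma is to verify its one nontrivial structural assumption: that the disjoint union $H_k(n-z,a,F^*,S)\cup H^*$ contains no Berge-$F$. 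The new, weaker-looking hypothesis we are handed is that $H_k(n-z,a,F^*,S)$ is Berge-$F_i$-free for \emph{every} component $F_i$ of $F$, not merely for $F^*$ itself. So the whole content of the proof is the implication ``$H_k(n-z,a,F^*,S)$ avoids every single component as a Berge subgraph'' $\Longrightarrow$ ``$H_k(n-z,a,F^*,S)\cup H^*$ avoids all of $F$.''

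First I would argue by contradiction: suppose the disjoint union contains a Berge-$F$, witnessed by some embedding with a bijection $\phi$ from $E(F)$ to the corresponding hyperedges. The key observation is that the two pieces of the disjoint union share no vertices, and crucially share no hyperedges. Since $F$ is disconnected into components $F_1,\dots,F_c$, each component of $F$ must be mapped, under the witnessing embedding, into a single connected piece of the host hypergraph's incidence structure. I would formalize this by noting that a Berge copy of a connected graph must use hyperedges that are ``linked'' through shared core vertices, so the core vertices of any one component $F_j$ of $F$ cannot straddle the two disjoint parts $V(H^*)$ and $V(H_k(n-z,a,F^*,S))$ in a way that uses edges from both parts. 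Thus each component $F_j$ of $F$ is realized as a Berge copy living entirely inside one of the two pieces.

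The counting step then finishes the argument. The host $H^*$ is a Berge-$(F-F^*)$, i.e.\ a Berge copy of the union of the $c-1$ components other than $F^*$, and it has exactly $|E(F)|-|E(F^*)|$ hyperedges. A Berge-$F$ uses all $|E(F)|$ edges, with distinct hyperedges for distinct edges of $F$. Since $H^*$ supplies at most $|E(F)|-|E(F^*)|$ hyperedges, at least $|E(F^*)|$ of the hyperedges of the Berge-$F$ must come from $H_k(n-z,a,F^*,S)$. I would combine this edge-count with the component-localization from the previous paragraph: at least one full component $F_j$ of $F$ must be realized entirely within $H_k(n-z,a,F^*,S)$, because $H^*$ simply does not contain enough hyperedges to host all components whose edges land in it. But that produces a Berge-$F_j$ inside $H_k(n-z,a,F^*,S)$ for some $1\le j\le c$, contradicting the hypothesis that $H_k(n-z,a,F^*,S)$ is Berge-$F_i$-free for all $i$. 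Hence the disjoint union is Berge-$F$-free, and Lemma~\ref{lemma HnaFS construction} applies to give $\sat_k(n,\text{Berge-}F)=O(n)$.

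The main obstacle I anticipate is making the component-localization rigorous: I must argue carefully that in a disjoint union of hypergraphs, a Berge copy of a connected graph cannot split its edges across both parts. The cleanest way is to observe that if a connected component $F_j$ had edges mapped into both pieces, then following a path in $F_j$ between two such edges would force some intermediate core vertex to lie in a hyperedge of one piece and a hyperedge of the other simultaneously, which is impossible since the pieces are vertex-disjoint and a core vertex is a genuine vertex of the host. Once this ``each component lands wholly in one part'' fact is established, the edge-counting pigeonhole is routine, so I would present the localization lemma explicitly and keep the counting brief.
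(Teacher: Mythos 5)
Your proof is correct and follows essentially the same route as the paper's (much terser) proof: the paper likewise notes that the disjoint union is Berge-$F$-free because no Berge-$F_i$ lives in $H_k(n-z,a,F^*,S)$ and $H^*$ has too few hyperedges to host a Berge-$F$, then invokes Lemma \ref{lemma HnaFS construction}. Your write-up simply makes explicit the component-localization and pigeonhole steps that the paper leaves implicit.
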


\begin{proof}
	Note that the disjoint union $H_k(n-z,a,F^*,S)\cup H^*$ is Berge-$F$-free since there is no Berge-$F_i$ in $H_k(n-z,a,F^*,S)$, and $H^*$ has too few edges to contain a Berge-$F$. Thus, we are done by Lemma \ref{lemma HnaFS construction}.
\end{proof}

Now, we will show how to use Lemmas \ref{lemma HnaFS construction} and \ref{lemma HnaFS construction 1 component} to establish linearity for small uniformities. First, we make an easy, but important observation.

\begin{observation}\label{observation f=0}
	If $F$ is a graph with a component $F^*$ such that $f(F^*)=0$ (i.e. $F^*$ is a tree), $H_k(n,a,F^*,\emptyset)$ is empty, so Lemma \ref{lemma HnaFS construction 1 component} gives us that the saturation number is linear for any graph with an acyclic component.
\end{observation}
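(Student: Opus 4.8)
The plan is to instantiate Construction~\ref{construction HnaFS} together with Lemma~\ref{lemma HnaFS construction 1 component} using the feedback set $S=\emptyset$, and then to verify that the resulting hypergraph has no edges at all. Since $F^*$ is a tree, it is already acyclic, so $F^*-\emptyset=F^*$ is acyclic and $S=\emptyset$ is a valid vertex feedback set with $f(F^*)=0$. This is the choice I would feed into the construction.

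Next I would trace through Construction~\ref{construction HnaFS} with $G=F^*$ and $S=\emptyset$. Here $\ell=|E(F^*[S])|=0$, and the first two parts collapse: $|V_1|=f(F^*)=0$ and $|V_2|=(k-2)\ell=0$. The first phase therefore adds $\ell=0$ hyperedges. For the second phase I would invoke the relaxed hypothesis of Lemma~\ref{lemma HnaFS construction}: the construction's requirement $|S|\ge k-a$ is weakened in the lemma to ``if $|S|\ne 0$ then $|S|>k-a$,'' so when $|S|=0$ any $a$ with $1\le a\le k-1$ is permitted and such an $a$ exists. For each $a$-set $A\in\mathcal{M}$ the construction adds $\binom{|V_1|}{k-a}=\binom{0}{k-a}$ hyperedges; since $1\le a\le k-1$ forces $k-a\ge 1$, this binomial coefficient is $0$. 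Hence no hyperedges are added in either phase, and $H_k(n,a,F^*,\emptyset)$ is the empty hypergraph, as claimed.

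Finally I would apply Lemma~\ref{lemma HnaFS construction 1 component}. Its hypotheses on $S$, $a$, and $H^*$ are met: $S=\emptyset$ is a feedback set, $a$ is chosen as above, and $H^*$ may be taken to be any Berge copy of $F-F^*$ (one exists by the definition of a Berge hypergraph). The only remaining condition is that $H_k(n-z,a,F^*,\emptyset)$ be Berge-$F_i$-free for every component $F_i$. Because we assume $F$ has no isolated vertices, every component $F_i$ contains at least one edge, and the empty hypergraph contains no Berge copy of any graph with an edge. Thus the condition holds vacuously, and the lemma yields $\sat_k(n,\text{Berge-}F)=O(n)$.

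I do not expect a genuine obstacle here; the entire content is bookkeeping inside the construction. The one point that must be checked with care is that the $S=\emptyset$ case is actually admissible: the construction as literally stated demands $|S|\ge k-a$, which fails for $|S|=0$, but the companion lemmas relax this to a conditional constraint, so the degenerate empty-feedback-set choice is legitimate precisely in the regime where it is needed.
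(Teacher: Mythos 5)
Your proposal is correct and follows exactly the paper's (implicit) argument: with $S=\emptyset$ both phases of Construction~\ref{construction HnaFS} contribute zero hyperedges, the empty hypergraph is trivially Berge-$F_i$-free for every component $F_i$ (each has an edge, since $F$ has no isolated vertices), and Lemma~\ref{lemma HnaFS construction 1 component} finishes. Your careful note that the admissibility of $|S|=0$ rests on the relaxed condition in Lemma~\ref{lemma HnaFS construction} rather than the construction's literal requirement $|S|\geq k-a$ is a worthwhile clarification the paper leaves unstated.
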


We will take care of most forbidden graphs $F$ by showing that if the vertex cover number of $F$ is not too small, then we are done.

\begin{theorem}\label{theorem beta>k}
	If $F$ is a graph with $\beta:=\beta(F)\geq k+1$, then
	\[
	\mathrm{sat}_k(n,\text{Berge-}F) = O(n).
	\]
\end{theorem}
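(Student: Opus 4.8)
The plan is to reduce to an application of Lemma~\ref{lemma HnaFS construction} (or its single-component form, Lemma~\ref{lemma HnaFS construction 1 component}): I must exhibit a sub-collection of components $F^*$ of $F$, a minimum feedback set $S$ of $F^*$, and an admissible parameter $a$ for which $H_k(\cdot,a,F^*,S)$ contains no Berge copy of any component of $F$. As a preliminary reduction, if some component of $F$ is a tree then Observation~\ref{observation f=0} already gives linearity, so I may assume every component has positive feedback number. I would also record the elementary inequality $\beta(G)>f(G)$ whenever $f(G)\ge1$: a vertex cover of size $f(G)$ would be a minimum feedback set, and deleting any one of its vertices would leave a star together with isolated vertices, which is acyclic, contradicting minimality. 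Thus I may freely use $\beta(F)\ge f(F)+1$ alongside the hypothesis $\beta(F)\ge k+1$.

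The heart of the proof is a structural description of any hypothetical Berge copy of a component $F_i$ inside $H:=H_k(\cdot,a,F^*,S)$. In $H$ the only vertices of unbounded degree are the $f(F^*)$ vertices of $V_1$; every vertex of $V_2$ has degree $1$; and every vertex of a block $A\in\mathcal{M}$ lies only in the $\binom{|V_1|}{k-a}$ hyperedges $A\cup B$ with $B\in\binom{V_1}{k-a}$. Fixing a witnessing embedding of $F_i$ with core set $W$ and writing $X=W\cap V_1$ (so $|X|\le f(F^*)$), I would argue that a core vertex placed in $V_2$ must be a leaf of $F_i$, and that any edge of $F_i$ with no endpoint in $X$ is witnessed either by one of the $\ell$ edges meeting $V_2$ or by an edge $A\cup B$, which forces both its endpoints into a single block $A$. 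Hence, after deleting $X$, the graph $F_i$ falls apart into a degree-$\le 1$ part carried by $V_2$ and pieces each living in a single block, so every such piece has at most $a$ vertices.

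From here two competing obstructions block the embedding, and the task is to choose $a$ (and $F^*$) so that one of them always fires. The first is a covering obstruction, cleanest when $a=1$: then every block-piece is a single vertex, so $X$ together with the at most $\ell$ leaves carried by $V_2$ is a vertex cover of $F_i$ of size at most $f(F^*)+\ell$, which is impossible once $\beta$ is large enough relative to $f$. In the regime $f(F)\ge k$ I would take $F^*=F$ and $a=1$ (admissible since $|S|=f\ge k>k-1$) and drive $\beta(F)>f(F)$ against this bound. The second is a system-of-distinct-representatives obstruction, used when $f(F)<k$ forces $a\ge k-f+1\ge2$: the edges of $F_i$ assigned to a fixed block $A$ must receive distinct hyperedges from the pool $\{A\cup B\}$ of size $\binom{|V_1|}{k-a}$, while those incident to a fixed apex $p\in X$ must use distinct $B\ni p$, of which there are only $\binom{|V_1|-1}{k-a-1}$; when a block-piece together with its attachments to $X$ violates the resulting Hall condition, no embedding exists. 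Taking $a=k-1$ shrinks these pools to $f(F^*)$ and to $1$, respectively, which is what kills short cycles once $f(F^*)$ is small; and when the per-component feedback number is too large for the pools to be tight, I would instead let $F^*$ be a sub-collection of components chosen so that $\binom{f(F^*)}{k-a}$ drops below the number of edges that any component is forced to pack into a single block.

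I expect the main obstacle to be the bookkeeping that makes these two obstructions jointly cover every instance of $\beta(F)\ge k+1$ for all components $F_i$ at once, rather than either in isolation. The two delicate points are controlling the ``leak'' through the degree-one vertices of $V_2$ (equivalently, the additive term $\ell=|E(F^*[S])|$ in the covering bound), which may dictate a careful choice of the feedback set $S$; and verifying the Hall condition for the distinct-representative assignment inside each block subject to the admissibility constraint $a\ge k-f(F^*)+1$. It is precisely this constraint, tying the usable block size $a$ to the feedback number, that decides when $F^*$ must be taken as a proper sub-collection of the components of $F$ rather than all of $F$.
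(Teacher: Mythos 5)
Your proposal has a genuine gap, and it stems from a single self-imposed restriction: you insist that $S$ be a \emph{minimum} feedback set of $F^*$. That forces $|S|=f(F^*)$, so the block size $a=1$ is admissible only when $f(F)\geq k$, which is what pushes you into the two-regime split and, in the regime $f(F)<k$, into the vague Hall/SDR analysis with $a\geq 2$ and sub-collections. But neither Construction \ref{construction HnaFS} nor Lemma \ref{lemma HnaFS construction} requires $S$ to be minimum, and the paper's entire proof is the observation that a \emph{non-minimum} feedback set fixes everything at once: take a minimum vertex cover $C$ of $F$ and let $S=C\setminus\{w\}$ for any $w\in C$. Every edge of $F-S$ meets $w$, so $F-S$ is a star plus isolated vertices and $S$ is a feedback set of size $\beta-1\geq k>k-1$; hence $a=1$ is admissible for \emph{every} $F$ with $\beta(F)\geq k+1$, irrespective of $f(F)$. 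Then your own covering obstruction fires universally: in $H_k(n,1,F,S)$ each hyperedge is either one of the $\ell$ edges meeting $V_2$ or a block vertex together with $k-1$ vertices of $V_1$, so every edge of a witnessed subgraph either meets $V_1$ or lies inside $V_2$; and since $V_2$-vertices have degree $1$, an edge inside $V_2$ is an \emph{isolated edge}, not merely a low-degree piece. After the reduction via Observation \ref{observation f=0} the graph $F$ has no acyclic components, so a witnessed copy of $F$ would have $V_1$ as a vertex cover, giving $\beta(F)\leq|V_1|=\beta-1$, a contradiction. Note this also dissolves your worry about the ``$+\ell$ leak'': no careful choice of $S$ is needed, because the $V_2$-part of a witness can only consist of isolated edges, which cyclic components never have.

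The regime $f(F)<k$ of your plan is not just unfinished; as described it fails on concrete examples. Take $k=4$ and $F=3K_3$, so $f(F)=3<k$ and $\beta(F)=6\geq k+1$. With $F^*=F$ and $S$ a minimum feedback set (one vertex per triangle, $\ell=0$), the admissible block sizes are $a=2$ and $a=3$, and in \emph{both} cases $H_4(n,a,F,S)$ contains a Berge-$3K_3$: for $a=2$ each triangle embeds as an apex $s_i$ plus a block pair $\{x,y\}$, using the SDR $s_ix\mapsto\{x,y,s_i,s_j\}$, $ys_i\mapsto\{x,y,s_i,s_m\}$, $xy\mapsto\{x,y,s_j,s_m\}$; for $a=3$ a Berge triangle sits entirely inside each block via $xy\mapsto A\cup\{s_1\}$, $yz\mapsto A\cup\{s_2\}$, $zx\mapsto A\cup\{s_3\}$. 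So no Hall-type obstruction can fire there, and you would have to retreat to $F^*=K_3$ (which is really Lemma \ref{lemma f=1}). For \emph{connected} graphs in this regime --- e.g.\ three triangles joined by paths ($f=3$, $\beta=6$, $k=4$), or any connected $F$ with $f=k-1$ and $\beta=k+1$ for large $k$ --- the sub-collection escape hatch does not exist at all, and verifying your Hall condition case by case is precisely the analysis the authors describe as intractable for large $k$; the paper introduces Theorem \ref{theorem beta>k} exactly to avoid it. In short: your regime $f(F)\geq k$ is essentially the paper's argument handicapped by the minimality assumption, and your regime $f(F)<k$ cannot be completed as proposed; the missing idea is to let $S$ be a vertex cover minus one vertex.
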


\begin{proof}[Proof of Theorem \ref{theorem beta>k}]
	By Observation \ref{observation f=0}, we may assume every component of $F$ contains a cycle. Let $C$ be a minimum vertex cover of $F$. Let $S\subset C$ with $|S| = \beta-1>k-1$. Note that $S$ is a feedback set since the remaining edges in $F-S$ form a star centered at the vertex in $C\setminus S$. We will show that $H=H_k(n,1,F,S)$ is Berge-$F$-free, which will suffice to complete the proof by Lemma \ref{lemma HnaFS construction} (applied with $F=F^*$).
	
	Let $G$ be a graph such that Berge-$G$ is in $H$, and embed $G$ on the same vertex as $H$ such that $G$ witnesses Berge-$G$. Note that every edge of $G$ is either incident with a vertex in $V_1$ or is an isolated edge contained in $V_2$. Thus, if $G$ does not contain any acyclic components, $S\cap V(G)$ is a vertex cover of $G$, and thus $\beta(G)\leq|S|<\beta(F)$, so $H$ must be Berge-$F$-free, and we are done.
\end{proof}

By Theorem \ref{theorem beta>k}, in order to prove Theorem \ref{theorem main} we need only consider the case where $\beta(F) \leq k$. Note that for any graph $F$ we have $f(F) < \beta(F)$. In general, Theorem \ref{theorem beta>k} goes a long way towards proving the conjecture from \cite{GRWCsat} that the saturation number for Berge-$F$ is linear for any $k$ and $F$. Indeed, it can easily be shown that this implies that for any fixed $k$, the saturation number of Berge-$F$ grow linearly for almost all graphs $F$. However, the case analysis for general $k$ for when $\beta(F) \leq k$ and $f(F) \leq k-1$ becomes intractable for large $k$, and this is why we were only able to prove Theorem \ref{theorem main} for $k\in \{3,4,5\}$. In order to prove the theorem for general uniformities using our approach, some new insight would be needed on how to deal with many different feedback numbers or vertex cover numbers at once. In order to prove Theorem \ref{theorem main}, we only need to consider a subset of the cases where $\beta(F) \leq 5$ and $f(F) \leq 4$. We handle these in a few separate ways, which we summarize in the following table.

\begin{center}
\begin{tabular}{|c|c|c|c|}
\hline
 $k$&$\beta(F)$&$f(F)$&Place completed\\
\hline
 arbitrary&arbitrary&$0$&Observation \ref{observation f=0}\\
\hline
 arbitrary&arbitrary&$1$&Lemma \ref{lemma f=1}\\
\hline 
  arbitrary&arbitrary&$2$&Lemma \ref{lemma f=2}\\
\hline
 arbitrary&$4$, $5$&$3$&Lemma \ref{lemma f=3}\\
 \hline
 $5$&$5$&$4$&Lemma \ref{lemma f=4}\\
 \hline

\end{tabular}
\end{center}

We now go through the varying cases outlined in the preceding table. Note, the following lemmas are constructed in a way to rule out not just a graph with a certain feedback number, but a graph containing any component with a certain feedback number. In this way, each lemma can be used to simplify the work of the following lemmas.

\begin{lemma}\label{lemma f=1}
	Let $F$ be a graph and let $F^*$ be a component of $F$ such that $f(F^*)=1$. Then $\sat_k(n,\text{Berge-}F)=O(n)$.
\end{lemma}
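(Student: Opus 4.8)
The plan is to feed the component $F^*$ into Construction \ref{construction HnaFS} and invoke Lemma \ref{lemma HnaFS construction 1 component}. Write $w$ for a feedback vertex of $F^*$, so that $F^* - w$ is a forest, and fix the construction parameter $a = 2$. I choose the feedback set $S$ to consist of $w$ together with enough additional vertices that $|S| = k-1$ (enlarging a feedback set keeps it one, and this meets the hypothesis $|S| > k-a = k-2$ of Lemma \ref{lemma HnaFS construction}), and let $H^*$ be any Berge-$(F - F^*)$ on $z$ vertices. With these choices it remains, by Lemma \ref{lemma HnaFS construction 1 component}, to show that $H := H_k(n-z, 2, F^*, S)$ is Berge-$F_i$-free for every component $F_i$ of $F$; by Observation \ref{observation f=0} we may assume each such $F_i$ contains a cycle.

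The key structural step analyzes which Berge copies $H$ can contain. Every hyperedge of $H$ is either one of the $\ell$ edges placed inside $V_1 \cup V_2$, or has the form $A \cup B$ with $A \in \mathcal{M}$ a pair (since $a=2$) and $B$ a $(k-2)$-subset of $V_1$. Consider any $G$ witnessed as a Berge-$G$ in $H$, and let $W$ be the set of its core vertices lying in $V_1$, so $|W| \le |V_1| = |S| = k-1$. I claim $G - W$ is a \emph{matching}. Indeed, a core edge avoiding $W$ avoids $V_1$, so if it comes from a hyperedge $A \cup B$ it must lie inside the pair $A$, while if it comes from one of the $\ell$ edges inside $V_1 \cup V_2$ it lies inside $V_2$; since the pairs of $\mathcal{M}$ are disjoint and each vertex of $V_2$ has degree one, all such edges are vertex-disjoint. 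Hence $W$ is a feedback set of $G$ and, more strongly, $G - W$ has maximum degree at most one. Consequently $H$ can contain a Berge-$F_i$ only if $F_i$ admits a set of at most $k-1$ vertices whose deletion leaves a graph of maximum degree at most one.

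It then suffices to rule out this last possibility for each component $F_i$. For components large enough that no set of at most $k-1$ vertices can be removed to force the maximum degree down to one, the structural step already yields Berge-$F_i$-freeness, and Lemma \ref{lemma HnaFS construction 1 component} closes the argument; this in particular covers $F^*$ itself as long as $F^*$ is not one of the exceptional graphs described next. The hard part, and the main obstacle I anticipate, is the finite list of \emph{small} components: bounded-size graphs (for instance short cycles such as $C_3$ and $C_4$, or the bowtie) from which deleting at most $k-1$ vertices does leave a matching. For these the construction above fails to be Berge-free—indeed, when $|V(F^*)|$ is so small that $|S|$ is forced up to $|V(F^*)|$, the $\ell$ edges of the construction literally realize a Berge-$F^*$—so such components must be dispatched by a separate explicit linear Berge-$F_i$-free saturated hypergraph, or by tuning $a$ and $S$ to the individual graph. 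Since for $k \le 5$ there are only finitely many such graphs, each can be handled directly, and I expect essentially all of the genuine difficulty to reside in this finite case analysis rather than in the general structural argument.
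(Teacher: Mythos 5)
Your structural analysis of $H_k(n-z,2,F^*,S)$ is correct as far as it goes, but the proposal does not prove the lemma: exactly the cases you defer (components such as $C_3$, $C_4$, or the bowtie, from which deleting at most $k-1$ vertices leaves a matching) are handed off to an unspecified ``separate explicit'' construction or to ``tuning $a$ and $S$'', and no such argument is supplied. These are not fringe cases; for $F=K_3$ (which satisfies $f(F)=1$) your construction with $k=3$ genuinely contains a Berge-$K_3$: taking $S=\{w,u\}$, the hyperedge $\{w,u,x\}$ with $x\in V_2$, together with $\{a_1,a_2,w\}$ and $\{a_1,a_2,u\}$ for a pair $\{a_1,a_2\}\in\mathcal{M}$, witness one. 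Moreover your fallback claim is false: there are infinitely many connected graphs from which deleting at most $k-1$ vertices leaves a matching (e.g., a triangle with arbitrarily many pendant edges attached at a single vertex), so there is no finite list to check; and in any case the lemma is stated, and used in the paper's table of cases, for every uniformity $k$, not only $k\le 5$.

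The gap traces back to padding $S$ up to size $k-1$. Besides being incompatible with Construction \ref{construction HnaFS} as written (it sets $|V_1|=f(G)$ and labels $V_1$ by the vertices of $G[S]$, so $S$ is implicitly a \emph{minimum} feedback set), the padding introduces the $\ell=|E(F^*[S])|$ hyperedges realizing a Berge-$F^*[S]$, and these are precisely what create the unwanted Berge copies. The paper instead takes $S=\{v\}$ to be the single feedback vertex and $a=k-1$: then $\ell=0$, so $V_2=\emptyset$ and every hyperedge of $H_k(n-z,k-1,F^*,\{v\})$ has the form $\{v\}\cup A$ with $A\in\mathcal{M}$. Hence $v$ is the only vertex of degree at least $2$, the construction contains no Berge cycle whatsoever, and since (after Observation \ref{observation f=0}) every component of $F$ may be assumed to contain a cycle, Lemma \ref{lemma HnaFS construction 1 component} finishes the proof uniformly, with no exceptional cases. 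You were apparently pushed into padding by the hypothesis ``$|S|>k-a$'' of Lemma \ref{lemma HnaFS construction}; note that this strict inequality is only needed in its proof when $a<k-1$ (to build the Berge stars via tight cycles), while for $a=k-1$ the construction's requirement $|S|\ge k-a$ suffices, which is exactly how the paper applies it.
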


\begin{proof}
	By Observation \ref{observation f=0}, we can assume that every component of $F$ has vertex feedback number at least $1$. 
	Let $z$ be the number of vertices in some Berge-$(F-F^*)$ (or $z=0$ if $F=F^*$). Let $v$ be a vertex of $F^*$ whose removal leaves a forest. Note that there is only one vertex in $H_k(n-z,k-1,F^*,\{v\})$ of degree $\geq 1$, so this graph cannot contain a Berge cycle. Since every component of $F$ contains a cycle, $H_k(n-z,k-1,F^*,\{v\})$ contains no Berge copy of any component of $F$, and so by Lemma \ref{lemma HnaFS construction 1 component}, we are done.
\end{proof}

\begin{lemma}\label{lemma f=2}
	Let $F$ be a graph with a component $F^*$ with $f(F^*)=2$. Then $\sat_k(n,\text{Berge-}F)=O(n).$
\end{lemma}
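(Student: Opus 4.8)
The plan is to follow the template of Lemma \ref{lemma f=1}, except that the construction obtained here can contain Berge cycles, so instead of forbidding all Berge cycles we will only bound the feedback number of every Berge subgraph. First I would reduce to the essential case: by Observation \ref{observation f=0} we may assume $F$ has no acyclic component, and by Lemma \ref{lemma f=1} we may assume no component of $F$ has feedback number $1$ (in either case we would already be done), so every component $F_i$ of $F$ satisfies $f(F_i)\geq 2$. Fix a minimum vertex feedback set $S=\{u,v\}$ of $F^*$ (so $|S|=2$), let $z$ be the number of vertices of some Berge-$(F-F^*)$, and set $a=k-1$. Since $|S|=2>1=k-a$, this choice meets the hypotheses of Lemma \ref{lemma HnaFS construction}. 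With $a=k-1$, each set $A$ in the collection $\mathcal{M}$ of Construction \ref{construction HnaFS} has size $k-1$ and contributes exactly the two hyperedges $A\cup\{w_u\}$ and $A\cup\{w_v\}$, where $w_u,w_v$ are the two vertices of $V_1$; the only other possible edge of $H:=H_k(n-z,k-1,F^*,S)$ is the single $S$-edge through $w_u$ and $w_v$ (present iff $uv\in E(F^*)$). By Lemma \ref{lemma HnaFS construction 1 component} it suffices to show $H$ is Berge-$F_i$-free for each $i$, and since $f(F_i)\geq 2$, this follows once I show that every graph $G$ with Berge-$G\subseteq H$ has $f(G)\leq 1$.

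The core of the argument is a structural description of the cycles of such a $G$. Every vertex of $V_2$ lies in at most the one $S$-edge, and every vertex $x\in V_3'$ lies in exactly the two hyperedges $A(x)\cup\{w_u\}$ and $A(x)\cup\{w_v\}$, where $A(x)\in\mathcal{M}$ is the set containing $x$; hence in $G$ every $V_2$-vertex has degree at most $1$ and every $V_3'$-vertex has degree at most $2$. Consequently no cycle of $G$ meets $V_2$, and the only vertices of $G$ that can have degree exceeding $2$ are $w_u$ and $w_v$. The key observation is that if two $V_3'$-vertices are adjacent in $G$ then, as the hyperedge realizing that edge must contain both, they lie in a common set $A\in\mathcal{M}$; and for a maximal run of consecutive $V_3'$-vertices on a cycle, the vertex just outside the run is joined to an $A$-vertex by a hyperedge contained in $A\cup\{w_u,w_v\}$, so by maximality of the run it must be $w_u$ or $w_v$. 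Thus on any cycle the maximal runs of $V_3'$-vertices are separated by occurrences of $w_u$ or $w_v$.

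I would then finish with a short counting argument on these separators, using injectivity of the witnessing bijection $\phi$ (only the two hyperedges $A\cup\{w_u\}$, $A\cup\{w_v\}$ can be assigned to the edges incident to a set $A$). A cycle contained entirely in a single set $A$ would need at least three distinct hyperedges among only two available, and a cycle with a single separator $w_u$ would force its two boundary edges both onto $A\cup\{w_u\}$; both are impossible. Hence every cycle of $G$ uses both separators $w_u$ and $w_v$, so in particular every cycle passes through $w_u$. Therefore $G-w_u$ is acyclic and $f(G)\leq 1<2\leq f(F_i)$, which gives that $H$ is Berge-$F_i$-free for every component and completes the proof via Lemma \ref{lemma HnaFS construction 1 component}. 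The main obstacle is establishing the separation claim cleanly, i.e.\ ruling out cycles that avoid one of $w_u,w_v$; everything else is bookkeeping about which hyperedges are available to the edges of a run and the degree bounds coming from the rigid two-edges-per-set structure forced by the choice $a=k-1$.
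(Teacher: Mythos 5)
Your proposal is correct and follows essentially the same route as the paper: the same construction $H_k(n-z,k-1,F^*,\{u,v\})$ after the same reductions via Observation \ref{observation f=0} and Lemma \ref{lemma f=1}, the same key claim that every cycle in a witnessed $G$ must pass through both vertices of $V_1$ (so that $f(G)\leq 1$), and the same conclusion via Lemma \ref{lemma HnaFS construction 1 component}. The only difference is presentational: your run-and-separator analysis with the injectivity counting spells out in detail what the paper compresses into a single sentence about adjacent pairs outside $V_1$ sharing at most two hyperedges.
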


\begin{proof}
	By Observation \ref{observation f=0} and Lemma \ref{lemma f=1}, we may assume that every component of $F$ has vertex feedback number at least $2$. Let $z$ be the number of vertices in some Berge-$(F-F^*)$ (or $z=0$ if $F=F^*$). Let $\{u,v\}$ be a vertex feedback set of $F^*$. Consider $H=H_k(n-z,k-1,F^*,\{u,v\})$. We will show $H$ does not contain a Berge-$G$ for any graph $G$ with $f(G)\geq 2$. Assume to the contrary that $H$ does contain Berge-$G$, and embed $G$ such that $G$ witnesses Berge-$G$.
	
	Note that any Berge cycle in $H$ must use both vertices in $V_1$ as core vertices since the only pairs of adjacent vertices in $H$ aside from the vertices in $V_1$ share at most two edges among them. Then $G-v$ is acyclic, which contradicts $f(G)\geq 2$. Thus, $H$ is Berge-$G$-free, and thus contains no Berge copy of any components of $F$, so by Lemma \ref{lemma HnaFS construction 1 component}, we are done.
\end{proof}

\begin{lemma}\label{lemma f=3}
	Let $F$ be a graph with $\beta(F)\leq 5$ that contains a component $F^*$ with $f(F^*)=3$. Then $\sat_k(n,\text{Berge-}F)=O(n)$.
\end{lemma}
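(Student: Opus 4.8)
The plan is to follow the template of Lemmas~\ref{lemma f=1} and~\ref{lemma f=2}: reduce to a single cyclic component, run Construction~\ref{construction HnaFS} with a size-$3$ feedback set, and verify that the resulting hypergraph is Berge-$F$-free so that Lemma~\ref{lemma HnaFS construction} applies. First I would invoke Observation~\ref{observation f=0} and Lemmas~\ref{lemma f=1},~\ref{lemma f=2} to assume every component of $F$ has feedback number at least $3$. Since $f(F)<\beta(F)\le 5$ forces $f(F)\le 4$, and every component already contributes at least $3$ to $f(F)$, the graph $F$ must be connected with $f(F)=f(F^*)=3$. I would then take $S$ to be a feedback set of $F$ with $|S|=3$ and set $a=k-1$ (so that $|S|=3>1=k-a$, meeting the hypotheses of Construction~\ref{construction HnaFS}), and argue that $H:=H_k(n,k-1,F,S)$ contains no Berge-$F$.

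The heart of the argument is a structural analysis of $H$. With $a=k-1$ each member $A\in\mathcal{M}$ is a $(k-1)$-set, and the only hyperedges meeting $A$ are the three sets $A\cup\{u\}$ with $u\in V_1$; in particular distinct members of $\mathcal{M}$ share no hyperedge, each vertex of $V_2$ lies in a single hyperedge, and only the $\ell$ hyperedges forming the Berge-$F[S]$ contain two vertices of $V_1$. Suppose $G$ witnesses a Berge-$G$ in $H$ with $G$ connected and $f(G)=3$. I would first observe that no cycle of $G$ can have all of its core vertices inside one $A\in\mathcal{M}$: such a cycle would have to use three distinct hyperedges meeting $A$, hence all three of $A\cup\{u_1\},A\cup\{u_2\},A\cup\{u_3\}$, leaving no hyperedge to join these core vertices to anything else, so they would form an isolated triangle component, contradicting that $G$ is connected with $f(G)=3$. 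Thus every cycle of $G$ meets $V_1$.

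The main obstacle is to upgrade ``every cycle meets $V_1$'' (which only yields $f(G)\le 3$) to $f(G)\le 2$, the desired contradiction. The key point, special to $a=k-1$ and $|S|=3$, is that for a fixed $u\in V_1$ and a fixed $A\in\mathcal{M}$ there is a \emph{unique} hyperedge, namely $A\cup\{u\}$, containing both. I would argue as follows. If at most two of the vertices of $V_1$ are core vertices, deleting them destroys every cycle and we are done. Otherwise all of $u_1,u_2,u_3$ are core vertices; I delete $u_1,u_2$ and claim $G-\{u_1,u_2\}$ is acyclic. Any surviving cycle must pass through $u_3$; its two incident cycle-edges cannot be Berge-$F[S]$ hyperedges, since after deleting $u_1,u_2$ those only connect $u_3$ to degree-$1$ vertices of $V_2$, so both are hyperedges of the form $A\cup\{u_3\}$. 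Because there is a unique such hyperedge per member of $\mathcal{M}$, these two edges enter two \emph{distinct} members $A\ne A'$ of $\mathcal{M}$; but the remainder of the cycle would then have to travel from $A$ to $A'$ without revisiting $u_3$ or using $u_1,u_2$, which is impossible since distinct members of $\mathcal{M}$ share no hyperedge and vertices of $V_2$ are dead-ends. Hence $f(G)\le 2$, contradicting $f(G)=3$, so $H$ is Berge-$F$-free and Lemma~\ref{lemma HnaFS construction} gives $\sat_k(n,\text{Berge-}F)=O(n)$.

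I expect the uniqueness-of-hyperedge step in the last paragraph to be the crux, and I would carefully check the boundary cases where few vertices of $V_1$ are used as core vertices, and where a cycle-edge at $u_3$ might be a Berge-$F[S]$ hyperedge rather than a blob hyperedge. It is worth noting why the approach is tied to $f=3$: for $f\ge 4$ one is forced to take $|S|\ge 4$, each $A\in\mathcal{M}$ then lies in at least four hyperedges, a blob triangle need no longer be isolated, and the clean argument above fails---this is presumably why Lemma~\ref{lemma f=4} is handled separately and only for $k=5$.
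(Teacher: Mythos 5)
Your proof is correct, but it departs from the paper's at the decisive step, so it is worth comparing the two. Both arguments make the same reduction (Observation \ref{observation f=0}, Lemmas \ref{lemma f=1} and \ref{lemma f=2}) and both run Construction \ref{construction HnaFS} on a feedback set $S$ of size $3$; however, the paper takes $a=k-2$, so each blob $A\in\mathcal{M}$ attaches to \emph{pairs} of vertices of $V_1$, while you take $a=k-1$, attaching each blob to single vertices of $V_1$. The contradictions are genuinely different. The paper shows that a witnessed $G$ with $f(G)\geq 3$ forces, for each $v\in V_1$, a triangle through $v$ with an edge inside some blob, that these three triangles must be vertex-disjoint, and hence $\beta(G)\geq\beta(3K_3)=6>5$; thus the hypothesis $\beta(F)\leq 5$ enters the structural argument itself. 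You instead exploit the fact that for a fixed blob $A$ and fixed $u\in V_1$ there is a \emph{unique} hyperedge $A\cup\{u\}$ containing both, to show that deleting at most two vertices of $V_1$ kills every cycle of any connected witnessed graph, i.e.\ $f(G)\leq 2$, and you invoke $\beta(F)\leq 5$ only to force $F$ to be connected. Your route is essentially the paper's own Lemma \ref{lemma f=2} mechanism ($a=k-1$, uniqueness of blob-to-$V_1$ hyperedges) pushed one step further, and it in fact proves a stronger statement: since your structural claim excludes Berge copies of \emph{every} connected graph with feedback number at least $3$, one could route through Lemma \ref{lemma HnaFS construction 1 component} and obtain linearity with no vertex-cover hypothesis at all (and then connectivity of $F$ is not needed either). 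What the paper's choice buys is uniformity of method: its $3K_3$ vertex-cover contradiction is exactly the mechanism reused in Lemma \ref{lemma f=4}, where, as you correctly note, your argument breaks down --- once $|S|\geq 4$ each blob lies in at least four hyperedges, so a blob triangle need not be isolated and cycles avoiding $V_1$ can occur.
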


\begin{proof}
	By Observation \ref{observation f=0}, and Lemmas \ref{lemma f=1} and \ref{lemma f=2}, we can assume every component of $F$ has vertex feedback number at least $3$. Let $z$ be the number of vertices in some Berge-$(F-F^*)$ (or $z=0$ if $F=F^*$). Let $S$ be a minimum vertex feedback set of $F^*$, Consider $H=H_k(n-z,k-2,F^*,S)$. We will show $H$ does not contain a Berge-$G$ for any $G$ with $f(G)\geq 3$ and $\beta(G)\leq 5$. Assume to the contrary that $H$ does, and embed $G$ such that $G$ witnesses Berge-$G$. Note that any cycle in $G$ must use a vertex in $V_1$ since otherwise the cycle would be contained in one of the sets $A\in\mathcal{M}$, and only three edges of $H$ are incident with vertices in $A$, and by the connectedness of $G$, one of these edges in $G$ must leave the set $A$. This implies that $V_1$ is a minimum feedback set. Furthermore, for $v\in V_1$, $v$ must be in a cycle that does not involve vertices in $V_1\setminus\{v\}$ (otherwise there would be a vertex feedback set of size $2$). This cycle must be a triangle with one edge in some set $A\in\mathcal{M}$. Furthermore, this cycle must use all the edges incident with vertices in $A$, so there exists three disjoint triangles, one for each vertex in $V_1$ in $F^*$. Since $\beta(3K_3)=6>\beta(G)$, we reach a contradiction. Thus, $H$ is Berge-$G$-free, and so $H$ does not contain a Berge copy of any components of $F$ so by Lemma \ref{lemma HnaFS construction 1 component}, we are done.
\end{proof}

The preceding work applies to all uniformities $k\geq 3$, and is sufficient to show that the saturation numbers $\sat_k(n,\text{Berge-}F)$ grow at most linearly for $k=3,4$ and all $F$. When $k=5$, we still need to deal with the case when $f(F)=4$ though. The next lemma is specific to $k=5$, and completes the proof of the main theorem.

\begin{lemma}\label{lemma f=4}
	Let $F$ be a graph with $\beta(F)=5$ and $f(F)=4$. Then $\sat_5(n,\text{Berge-}F)=O(n)$.
\end{lemma}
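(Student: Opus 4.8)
The plan is to mimic the structure of the proofs of Lemmas~\ref{lemma f=1}--\ref{lemma f=3}, applying Construction~\ref{construction HnaFS} with the choice $a=k-3=2$ and then extracting a large packing of triangles from any Berge copy the construction could conceivably contain. First I would reduce to the case that $F$ is connected: by Observation~\ref{observation f=0} and Lemmas~\ref{lemma f=1}, \ref{lemma f=2}, and \ref{lemma f=3} (the last applies since $\beta(F)\le 5$), I may assume that no component of $F$ has feedback number in $\{0,1,2,3\}$, so every component has feedback number at least $4$; since feedback number is additive over components and $f(F)=4$, there is exactly one component, i.e.\ $F$ is connected. I therefore take $F^*=F$, let $S$ be a minimum vertex feedback set of $F$ (so $|S|=f(F)=4$ and $|V_1|=4$), and set $H=H_5(n,2,F,S)$. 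This is admissible for Lemma~\ref{lemma HnaFS construction}, since $1\le 2\le k-1$ and $|S|=4>3=k-a$, and each $2$-set $A\in\mathcal{M}$ lies in exactly $\binom{|V_1|}{k-a}=\binom{4}{3}=4$ hyperedges. As $F=F^*$ (so $z=0$), Lemma~\ref{lemma HnaFS construction} reduces everything to showing that $H$ is Berge-$F$-free.

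Next I would record the local structure of $H$. Every hyperedge meets $V_1$ (both the $V_1$--$V_2$ edges and the $V_3'$--$V_1$ edges contain vertices of $V_1$); each vertex of $V_2$ has degree $1$; and two vertices of $V_3'$ lie in a common hyperedge only when they are the two elements of the same $a$-set $A\in\mathcal{M}$. Suppose for contradiction that $H$ contains a Berge-$F$, and embed $F$ so that it witnesses this copy. Since a degree-$1$ vertex cannot be a core vertex of any cycle, a cycle avoiding $V_1$ would have all of its core vertices in $V_3'$; but then each of its edges would join a vertex of $V_3'$ to its unique $a$-set partner, and a collection of disjoint partner-edges cannot form a cycle. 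Hence every cycle of $F$ uses a core vertex in $V_1$, so the core vertices lying in $V_1$ form a feedback set of $F$; as $f(F)=4=|V_1|$, all four vertices of $V_1$ are core vertices and form a minimum feedback set.

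The heart of the argument is to produce four vertex-disjoint triangles. For each $v\in V_1$, minimality forces $v$ onto a cycle avoiding $V_1\setminus\{v\}$ (otherwise $V_1\setminus\{v\}$ would be a feedback set of size $3$). Using the partner structure, I would show such a cycle is a triangle $\{v,x_1,x_2\}$ whose base $\{x_1,x_2\}=A_v$ is an $a$-set: two consecutive internal edges among $V_3'$ core vertices would require some vertex to have two distinct partners, which is impossible, so the only admissible cycle through $v$ has length three. All three edges of this triangle map, under the witnessing bijection, to hyperedges containing $A_v$. If two distinct $v,v'\in V_1$ satisfied $A_v=A_{v'}=A$, then the (at least five distinct) edges $vx_1,vx_2,v'x_1,v'x_2,x_1x_2$ would map to five distinct hyperedges all containing $A$, contradicting that only $4$ hyperedges contain $A$. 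Hence the four $a$-sets $A_v$ are distinct, the triangles $\{v\}\cup A_v$ are pairwise vertex-disjoint, and $F$ contains $4K_3$. But then $\beta(F)\ge\beta(4K_3)=8>5$, a contradiction, so $H$ is Berge-$F$-free and Lemma~\ref{lemma HnaFS construction} gives $\sat_5(n,\text{Berge-}F)=O(n)$.

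The step I expect to be the main obstacle is this triangle extraction: pinning down that each private cycle is \emph{exactly} a triangle resting on a single $a$-set, and then converting the uniqueness of $a$-set partners, together with the injectivity of the witnessing map, into genuine vertex-disjointness of the four triangles. Everything else — the catalogue of which vertices can serve as core vertices of cycles and the elementary bound $\beta(4K_3)=8$ — is routine bookkeeping.
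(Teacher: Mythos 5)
Your proof is correct and follows essentially the same route as the paper: the same reduction to connected $F$ via the earlier lemmas, the same construction $H_5(n,2,F,S)$, the same identification of $V_1$ as a minimum feedback set forcing a private triangle on a pair from $\mathcal{M}$ for each $v\in V_1$, and the same contradiction via the vertex cover number of disjoint triangles. The only cosmetic difference is that you extract all four disjoint triangles and use $\beta(4K_3)=8>5$, whereas the paper stops at three and uses $\beta(3K_3)=6>5$; both suffice.
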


\begin{proof}
	By Observation \ref{observation f=0} and Lemmas \ref{lemma f=1}, \ref{lemma f=2} and \ref{lemma f=3}, we can assume $F$ does not contain any component with feedback number less than $4$. Since $f(F)=4$, this implies that $F$ is connected. Let $S$ be a minimum vertex feedback set of $F$. We will show that $H=H_5(n,2,F,S)$ does not contain a Berge-$F$. Along with Lemma \ref{lemma HnaFS construction 1 component} (applied with $F^*=F$), this will complete the proof.
	
	Assume to the contrary that $H$ does contain a Berge-$F$. Embed $F$ in the vertex set of $H$ such that $F$ witnesses Berge-$F$. Since $a=2$ and since vertices in $V_2$ have degree $1$ in $H$, we have that $F-(V(F)\cap V_1)$ is a matching. Since $|V_1|=f(F)$, this implies that $V_1\subset V(F)$, and that $V_1$ is a minimum feedback set of $F$.
	
	Let $v\in V_1$. Then $F-(V_1\setminus\{v\})$ must contain a cycle that goes through $v$. By the structure of $H$, this cycle must be a triangle with vertices $v$, $x$ and $y$, where $\{x,y\}\in\mathcal{M}$. $v$ was chosen arbitrarily from $V_1$, so every vertex in $V_1$ must be in a triangle with a pair from $\mathcal{M}$. We claim that $F$ must contain three disjoint triangles. Indeed, let $\{x,y\}\in\mathcal{M}$. Note that only $4$ edges are incident with $x$ or $y$ in $H$, and so $x$ and $y$ are involved in at most $1$ triangle. Thus, for each vertex $v\in V_1$ to be in such a triangle, we need to use three pairs from $\mathcal{M}$, which gives us our three disjoint triangles. $\beta(3K_3)=6>\beta(F)$, giving us a contradiction. Thus, $H$ does not contain a Berge-$F$ and we are done.
\end{proof}

\section{Berge saturation for forbidden hypergraphs}\label{section conjecture}

let $F^{(r)}$ be an $r$-uniform hypergraph. Then for $k>r$, we say a $k$-uniform hypergraph $H$ is a $\text{Berge}_k\text{-}F^{(r)}$ if
there exists a bijection $\phi:E(F^{(r)})\to E(H)$ such that $e\subseteq \phi(e)$ for all $e\in E(F^{(r)})$. If the uniformity $k$ of the host hypergraph is clear from context, then we will refer to a $\text{Berge}_k\text{-}F^{(r)}$ as simply a Berge-$F^{(r)}$. Note that when $r=2$, this definition is consistent with the definition of Berge-$F$ in Section \ref{section notation}.

Here we present a conjecture that generalizes the conjecture in \cite{GRWCsat}.

\begin{conjecture}\label{conjecture higher uniform forbidden graph}
	Let $3\leq r<k$ be intergers and let $F^{(r)}$ be an $r$-uniform hypergraph. Then
	\[
	\sat_k(n,\text{Berge-}F^{(r)})=O(n^{r-1}).
	\]
\end{conjecture}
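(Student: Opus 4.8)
The plan is to generalize the entire machinery developed for the graph case (Construction \ref{construction HnaFS} and Lemma \ref{lemma HnaFS construction}) to the hypergraph setting, where the target bound becomes $O(n^{r-1})$ rather than $O(n)$. The key structural fact is that a feedback-type decomposition, now applied to $F^{(r)}$, should again let us build a sparse saturated host hypergraph. First I would define the appropriate notion of a feedback set for an $r$-uniform hypergraph $F^{(r)}$ and the analogue of Construction \ref{construction HnaFS}: partition the $n$ vertices into a bounded ``core'' part $V_1$ (of size depending only on $F^{(r)}$), a bounded part $V_2$ to realize the Berge copy of the hypergraph induced on the feedback set, and a large part $V_3$ that we group into $a$-sets $\mathcal{M}$ and attach to $V_1$ by adding all hyperedges meeting a fixed $a$-set together with $k-a$ vertices of $V_1$. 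The crucial difference is that in the $r$-uniform case the analogue of a ``tree'' $T = F^*-S$ is an acyclic $r$-uniform hypergraph, so the greedy extension argument should produce a construction with $O(n^{r-1})$ rather than $O(n)$ edges, since each $a$-set of $V_3$ can be joined to the bounded core in $\binom{|V_1|}{k-a}$ ways and there are $\Theta(n)$ such sets, but the degeneracy counting now involves $(k-1)$-subsets of a vertex set of size $\Theta(n^{??})$.

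The core of the argument is to rerun the degeneracy bound from the proof of Lemma \ref{lemma HnaFS construction}. I would add hyperedges greedily to the (assumed Berge-$F^{(r)}$-free) construction until it is saturated, collect the added edges into $H'$, and argue that $H'$ cannot contain a Berge copy of the acyclic hypergraph $T=F^*-S$ with core vertices spread across distinct members of $\mathcal{M}$. The degeneracy parameter $d$ would be set to a binomial coefficient $\binom{m}{k-1}$ where $m$ is a quantity bounded by a constant plus a controlled multiple of the number of $\mathcal{M}$-sets already used; the point is that $m = O(1)$ for fixed $F^{(r)}$ and $k$, so $d = O(1)$, and a $d$-degenerate $k$-uniform hypergraph has $O(n)$ edges. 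To reach $O(n^{r-1})$ one must instead allow the feedback/core structure itself to have size growing like $n^{r-2}$: the natural hypergraph analogue of a vertex cover of bounded size is a set of $(r-1)$-subsets, so the ``core'' that interacts with the large part genuinely scales, and this is what lifts the linear bound to $O(n^{r-1})$.

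The hard part, and the main obstacle, will be identifying the correct notion of feedback set (or vertex cover) for $r$-uniform hypergraphs so that the two-part structure of the proof survives: we need that (i) after removing the feedback set the remainder is an acyclic hypergraph on which the greedy degeneracy argument applies, and (ii) the Berge copies of cyclic components are genuinely blocked by the sparse attachment pattern, exactly as the triangle-counting arguments in Lemmas \ref{lemma f=3} and \ref{lemma f=4} block Berge copies in the graph case. The $\beta(F)\geq k+1$ reduction (Theorem \ref{theorem beta>k}) handled ``most'' graphs at once; the analogous statement here would say that if the covering number of $F^{(r)}$ by $(r-1)$-sets is large enough relative to $k$, then the construction is automatically Berge-$F^{(r)}$-free. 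The remaining bounded-covering-number cases would, as in the $k\in\{3,4,5\}$ analysis, require a finite but potentially delicate case check depending on $k-r$; controlling this for all $r<k$ simultaneously is precisely why the statement is posed as a conjecture rather than a theorem.
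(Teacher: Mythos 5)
This statement is a \emph{conjecture} in the paper: the authors do not prove it, and they offer only evidence that the bound would be best possible (a lower bound of $\Omega(n^{r-1})$ for Berge tight paths, via the observation that every hyperedge of $P^{(r)}_\ell$ shares $r-1$ vertices with another, together with a matching $O(n^{r-1})$ upper bound on the Tur\'an number). Your proposal is likewise not a proof; it is a program for one, and you concede as much in your final sentence. So there is no complete argument here to certify, but it is worth naming where your sketch genuinely breaks, beyond the deferred case analysis.

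The central gap is that the degeneracy argument of Lemma \ref{lemma HnaFS construction} does not survive the lift to $r\geq 3$, and your two proposed fixes pull in opposite directions. As written, that argument bounds the greedily added edges by $dn$ with $d=O(1)$, giving a saturated host with $O(n)$ edges; but the paper's own lower-bound theorem in Section \ref{section conjecture} shows this is impossible in general, since for any $F^{(r)}$ in which every hyperedge meets another in at least $r-1$ vertices (tight paths, for instance) every Berge-$F^{(r)}$-saturated hypergraph has $\Omega(n^{r-1})$ edges. Hence the completion step must be allowed to add $\Theta(n^{r-1})$ edges, and one needs a genuinely new counting argument --- the notion of $d$-degeneracy with constant $d$ cannot certify an $O(n^{r-1})$ bound and simultaneously be consistent with the $\Omega(n^{r-1})$ lower bound. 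Your alternative, letting the core $V_1$ grow like $n^{r-2}$, destroys the other half of the machinery: every Berge-freeness argument in Section \ref{section linearity} (the vertex-cover comparison in Theorem \ref{theorem beta>k}, the ``only four edges meet this pair'' and $\beta(3K_3)=6>\beta(F)$ counts in Lemmas \ref{lemma f=3} and \ref{lemma f=4}) relies on $V_1$, $V_2$, and each member of $\mathcal{M}$ having size bounded by a constant depending only on $F$. With a core of polynomial size, none of those finiteness arguments apply, and you give no replacement. Finally, the foundational object --- a feedback-set/acyclicity notion for $r$-uniform hypergraphs under which (i) the remainder admits a greedy Berge-embedding and (ii) Berge copies of cyclic components are blocked --- is never defined, and choosing it correctly is arguably the whole difficulty. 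In short, your proposal reproduces the open problem rather than resolving it, which is consistent with the statement's status in the paper.
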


If Conjecture \ref{conjecture higher uniform forbidden graph} is true, then it is in some sense best possible. The tight path $P^{(r)}_\ell$ is an $r$-uniform hypergraph on $\ell$ vertices such that there exists an ordering of the vertex set $V(P^{(r)}_\ell)=\{v_1,\dots,v_\ell\}$ such that the edge set $E(P^{(r)}_\ell)$ consists of exactly the $\ell-r+1$ sets of $r$ consecutive vertices (in this ordering).  

The following theorems show the preceding conjecture is best possible, and also establishes the establishes the growth rates of both the saturation numbers and extremal numbers of Berge-$P^{(r)}_\ell$.

\begin{theorem}
	Let $3\leq r<k<\ell$ be integers and let $H$ be a $k$-uniform Berge-$P^{(r)}_\ell$-saturated hypergraph on $n$ vertices. Then
	\[
	|E(H)|=\Theta(n^{r-1}).
	\]
	
\end{theorem}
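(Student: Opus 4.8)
The plan is to prove matching bounds: that every Berge-$P^{(r)}_\ell$-free $k$-uniform hypergraph on $n$ vertices has $O(n^{r-1})$ edges (which bounds a saturated $H$ from above, since saturated hypergraphs are in particular free), and that every Berge-$P^{(r)}_\ell$-saturated hypergraph has $\Omega(n^{r-1})$ edges. Both arguments revolve around $(r-1)$-subsets of $V(H)$: for the lower bound I track the family $\mathcal{C}$ of $(r-1)$-sets that are \emph{covered} (contained in some hyperedge), using that each hyperedge covers only $\binom{k}{r-1}=O(1)$ of them; for the upper bound I track $(r-1)$-set degrees directly.

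For the lower bound I would first record the structural fact that in a tight path $P^{(r)}_\ell$ with at least two edges — which holds here, since $\ell>k>r$ forces $\ell-r+1\ge 2$ — every edge shares an $(r-1)$-set with a neighbouring edge. Consequently, if a $k$-set $e\notin E(H)$ is added and creates a Berge-$P^{(r)}_\ell$, then since $H$ is free $e$ must play the role of some path-edge $h_j$; its path-neighbour is then a genuine hyperedge of $H$ that covers an $(r-1)$-subset of $e$. Thus saturation forces every non-edge $e$ to contain a member of $\mathcal{C}$. Suppose now $|E(H)|=o(n^{r-1})$; then $|\mathcal{C}|\le\binom{k}{r-1}|E(H)|=o(n^{r-1})$, so the number of $k$-sets containing a member of $\mathcal{C}$ is at most $|\mathcal{C}|\binom{n-r+1}{k-r+1}=o(n^{k})$. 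Hence $\binom{n}{k}-o(n^k)$ of the $k$-sets are ``$\mathcal{C}$-independent'', and after discarding the $o(n^k)$ of these that lie in $E(H)$ there remains a $\mathcal{C}$-independent non-edge, which could be added without creating a Berge-$P^{(r)}_\ell$ — contradicting saturation. Therefore $|E(H)|=\Omega(n^{r-1})$.

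For the upper bound I would use a cleaning step followed by a greedy construction. Fix a constant $C'=C'(\ell,k,r)$ and repeatedly delete all hyperedges through any $(r-1)$-set of current positive degree below $C'$; this process terminates and removes at most $C'\binom{n}{r-1}=O(n^{r-1})$ edges, leaving a subhypergraph $H'$ in which every $(r-1)$-set of positive degree has degree at least $C'$. If $|E(H)|>Cn^{r-1}$ for a suitable $C>C'/(r-1)!$, then $H'$ is non-empty, and I would build a Berge tight path greedily: starting from any edge of $H'$, at each step the trailing $(r-1)$-set $T$ of the current core has positive degree, so $d_{H'}(T)\ge C'$. Choosing $C'$ larger than $\ell$ plus the bounded number $\binom{\ell}{k-r+1}$ of edges through $T$ whose remaining vertices all lie among the current (at most $\ell$) core vertices, there is always an unused edge through $T$; and because $k>r$ this edge has at least $k-r+1\ge 2$ vertices off $T$, hence contains a fresh vertex with which to extend the core. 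This yields a Berge-$P^{(r)}_\ell$ inside $H'\subseteq H$, contradicting Berge-freeness, so $|E(H)|=O(n^{r-1})$.

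The counting in the lower bound is routine. The main obstacle is the greedy extension in the upper bound, where one must verify that in the cleaned hypergraph an unused hyperedge through the trailing $(r-1)$-set and containing a new vertex always exists. The crucial inputs there are that $k>r$ guarantees a vertex outside the trailing $(r-1)$-set and that the number of core vertices never exceeds $\ell$, so that $C'$ can be chosen to dominate both the edges already used and the ``bad'' edges confined to the current core; pinning down these constants explicitly is the part that requires the most care.
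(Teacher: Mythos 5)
Your proof is correct, and it splits the bound the same way the paper does: a lower bound forced by saturation, plus an upper bound via $\sat_k(n,\text{Berge-}P^{(r)}_\ell)\le \ex_k(n,\text{Berge-}P^{(r)}_\ell)=O(n^{r-1})$. Your lower bound is essentially the paper's argument: the paper proves a general lemma (for any $F$ in which every hyperedge shares at least $t$ vertices with another, $\sat_k(n,\text{Berge-}F)=\Omega(n^t)$, applied with $t=r-1$) by exactly your counting --- every non-edge must meet some hyperedge of $H$ in at least $r-1$ vertices, and each hyperedge accounts for only $O(n^{k-r+1})$ such non-edges; your family $\mathcal{C}$ of covered $(r-1)$-sets is a repackaging of that double count (and you should replace the $o(\cdot)$-contradiction phrasing by the explicit inequality $\binom{n}{k}-m\le \binom{k}{r-1}\,m\,\binom{n}{k-r+1}$, since an $\Omega$ bound for a single saturated $H$ is cleaner stated that way). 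Where you genuinely diverge is the extremal bound. The paper uses the Gy\H{o}ri--Lemons reduction: process the hyperedges in an arbitrary order, let each one pick a least-picked $(r-1)$-subset of itself, and if some $(r-1)$-set reaches multiplicity $(\ell-r+1)\binom{\ell}{k}$, grow a Berge tight path by induction, with a multiplicity threshold $(\ell-i)\binom{\ell}{k}$ that decreases along the path and guarantees a fresh vertex at each step. You instead clean the hypergraph --- iteratively delete all edges through any $(r-1)$-set of positive degree below $C'$, losing at most $C'\binom{n}{r-1}$ edges --- and then extend greedily, using that the trailing $(r-1)$-set of the current core lies in the last used edge, hence has degree at least $C'$ in the cleaned hypergraph, which dominates the at most $\ell$ used edges plus the at most $\binom{\ell}{k-r+1}$ edges confined to the core. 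Both arguments hinge on $(r-1)$-set degrees and give comparable constants; yours is the more standard deletion-plus-greedy argument and is arguably easier to verify step by step, while the paper's ordering trick avoids any deletion and is the form that ties directly to the reduction lemma of Gy\H{o}ri and Lemons that inspired it.
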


This will easily follow from the next two theorems. Indeed, we apply the next theorem with $t = r-1$ to obtain a lower bound of $\Omega(n^{r-1})$, and an upper bound of $O(n^{r-1})$ follows since $\sat_k(n, \text{Berge-}P_l^{(r)}) \le ex_k(n,\text{Berge-}P_l^{(r)})$.

\begin{theorem}
	Let $F$ be a hypergraph such that for every hyperedge in $F$ there is another one that shares at least $t$ vertices with it. Let $k\ge t$. Then $\sat_k(n, \text{Berge-}F)=\Omega(n^t)$.
\end{theorem}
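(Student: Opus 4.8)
The plan is to extract from the saturation hypothesis a simple covering condition on non-edges, and then finish by a double counting argument. Let $H$ be a $k$-uniform Berge-$F$-saturated hypergraph on $n$ vertices, and write $m=|E(H)|$. The first step I would carry out is the following structural claim: \emph{every $k$-set $e\notin E(H)$ shares at least $t$ vertices with some genuine edge $g\in E(H)$}. To prove it, note that by saturation $H+e$ contains a Berge-$F$, and since $H$ is Berge-$F$-free this copy must use $e$. Fixing a witnessing embedding with bijection $\phi\colon E(F)\to E(H+e)$ and (injective) vertex map $\psi$, let $f_0\in E(F)$ be the hyperedge with $\phi(f_0)=e$. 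By the hypothesis on $F$ there is a hyperedge $f_1\neq f_0$ with $|f_0\cap f_1|\ge t$; since $\phi$ is a bijection, $g:=\phi(f_1)\neq e$, so $g\in E(H)$. The at least $t$ common vertices of $f_0$ and $f_1$ are mapped by $\psi$ into both $e=\phi(f_0)$ and $g=\phi(f_1)$, whence $|e\cap g|\ge t$.

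With the covering condition in hand, I would run the count. For a fixed edge $g\in E(H)$, the number of $k$-sets $e$ with $|e\cap g|\ge t$ is at most
\[
\sum_{j=t}^{k}\binom{k}{j}\binom{n-k}{k-j}=O(n^{k-t}),
\]
the dominant term being $j=t$. Every one of the $\binom{n}{k}-m$ non-edges of $H$ is covered, in this sense, by at least one edge of $H$, so
\[
\binom{n}{k}-m\;\le\;m\cdot O(n^{k-t}).
\]
Rearranging yields $m\ge \binom{n}{k}\big/\bigl(1+O(n^{k-t})\bigr)=\Omega(n^{t})$, which is the desired bound. (The extreme case $t=k$ is degenerate: no non-edge can share $k$ vertices with a distinct $k$-edge, forcing $H$ to be complete, so the conclusion holds trivially or vacuously.)

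I expect the only delicate step to be the structural claim of the first paragraph, and in particular the verification that the partner hyperedge $f_1$ corresponds under $\phi$ to an \emph{already-present} edge of $H$ rather than to the newly added set $e$, together with the observation that the shared core vertices of $f_0$ and $f_1$ persist as genuinely shared vertices of the two host edges. This is precisely the point at which the hypothesis ``every hyperedge shares at least $t$ vertices with another one'' is used, and it is the sole place where the internal structure of $F$ enters the argument; the remaining double count is routine.
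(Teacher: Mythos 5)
Your proposal is correct and follows essentially the same route as the paper: the same observation that saturation forces every non-edge to intersect some edge of $H$ in at least $t$ vertices, followed by the same double count of non-edges covered by a fixed edge. Your write-up is in fact slightly more careful than the paper's (you justify the covering claim via the bijection $\phi$, and your rearranged denominator $1+O(n^{k-t})$ fixes the paper's harmless ``$d-1$'' slip), but the argument is the same.
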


\begin{proof}
	Let $H$ be a $k$-uniform Berge-$F$-free saturated hypergraph with $m$ hyperedges. Since $H$ is saturated, any non-edge $e\in E(\overline{H})$ must intersect some edge in at least $t$ vertices, since otherwise $e$ cannot be in a copy of $F$ in $H+e$. A fixed edge intersects at most
	\[
	d:=\sum_{a=1}^{k-t} \binom{k}{k-a}\binom{n-k}a=O(n^{k-t})
	\]
	non-edges in at least $t$ vertices. Thus the number of non-edges is given by
	\[
	\binom{n}k-m\leq md.
	\]
	Solving for $m$ in the preceding inequality gives
	\[
	m\geq \binom{n}k/(d-1)=\Omega(n^{t}).
	\]
	
\end{proof}

\begin{theorem}
$ex_k(n,\text{Berge-}P_l^{(r)})=O(n^{r-1})$.
\end{theorem}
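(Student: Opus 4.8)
The plan is to bound the number of hyperedges in any Berge-$P^{(r)}_\ell$-free $k$-uniform hypergraph $H$ on $n$ vertices. The natural strategy is to pass from the $k$-uniform hypergraph to the associated $r$-uniform "trace" or "shadow" structure: since a Berge-$P^{(r)}_\ell$ requires a bijection from the edges of the tight path into the hyperedges of $H$ with each path-edge contained in its image, I would first extract from $H$ an auxiliary $r$-uniform hypergraph that records, for each $k$-edge, the $r$-subsets it could play the role of. Concretely, for each hyperedge $e\in E(H)$ I would look at all $\binom{k}{r}=O(1)$ many $r$-subsets of $e$; if $H$ contains no Berge-$P^{(r)}_\ell$, then this collection of $r$-sets (with multiplicity controlled by $k$) should itself avoid a tight path of length $\ell$ in a suitable Berge sense, reducing the problem to a Turán-type bound on $r$-uniform hypergraphs avoiding a (possibly Berge) tight path.

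First I would make precise the reduction: build the $r$-uniform multihypergraph $\mathcal{S}$ whose edges are the $r$-subsets of hyperedges of $H$, and argue that a long tight path in $\mathcal{S}$ (together with a system of distinct representatives choosing a distinct $k$-edge of $H$ containing each consecutive $r$-window) yields a Berge-$P^{(r)}_\ell$ in $H$. The defect version of Hall's theorem, or a greedy argument exploiting that each $k$-edge contains only $O(1)$ of the relevant $r$-windows, should let me extract such a distinct system as long as $\mathcal{S}$ contains a tight path that is long by a factor depending only on $k,r,\ell$. Then I would invoke the known Turán bound $ex_r(n, P^{(r)}_{\ell'}) = O(n^{r-1})$ for tight paths in $r$-uniform hypergraphs (a standard degenerate-hypergraph / Kővári–Sós–Turán-style estimate, or the linearity of the shadow when $\ell'$ is bounded), to conclude that $\mathcal{S}$ has $O(n^{r-1})$ edges, and hence $H$ has $O(n^{r-1})$ hyperedges since each hyperedge contributes boundedly many $r$-sets to $\mathcal{S}$.

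An alternative, more self-contained route avoids naming the shadow explicitly and instead argues by degeneracy directly, mirroring the degeneracy arguments in Lemma \ref{lemma HnaFS construction}. Here I would show that if $H$ has too many edges — more than $Cn^{r-1}$ for a suitable constant $C=C(k,r,\ell)$ — then one can greedily grow a Berge tight path one hyperedge at a time: having embedded a tight path on $j$ core vertices $v_1,\dots,v_j$ using distinct hyperedges, the number of hyperedges containing the last $r-1$ core vertices $v_{j-r+2},\dots,v_j$ is large (by an averaging/degeneracy argument on the codegree), so some such hyperedge introduces a fresh vertex $v_{j+1}$ and can be added as the next path edge while remaining distinct from all previously used hyperedges. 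Iterating until the path reaches length $\ell$ produces the forbidden Berge-$P^{(r)}_\ell$, contradicting $F$-freeness.

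The main obstacle I expect is controlling the \emph{distinctness} requirement in the Berge bijection throughout the greedy extension: unlike an ordinary tight path, a Berge tight path must assign a \emph{distinct} hyperedge to each of the $\ell-r+1$ consecutive windows, so I must ensure that at each step the newly chosen hyperedge has not already been used and that the fresh vertex $v_{j+1}$ avoids all previously used core vertices. Quantifying this is where the precise value of the degeneracy constant $d$ enters: I would need the relevant codegree (number of hyperedges through a fixed $(r-1)$-set) to exceed the bounded number of forbidden choices — at most $\ell$ used hyperedges and $\ell$ used vertices — which forces the threshold $Cn^{r-1}$ and, after a clean counting of how many hyperedges an $(r-1)$-set can lie in, yields the stated $O(n^{r-1})$ bound.
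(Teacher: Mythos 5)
Your first route (shadow plus SDR) breaks at the reduction step, and no constant-factor lengthening of the tight path can save it. Take $r=3$, $k=4$, $\ell=5$ (so $3\leq r<k<\ell$) and let $H$ be the ``loose path'' with hyperedges $f_i=\{v_{2i-1},v_{2i},v_{2i+1},v_{2i+2}\}$ for $1\leq i\leq \lfloor (n-2)/2\rfloor$, so consecutive hyperedges share exactly two vertices. Every three consecutive vertices lie in a common hyperedge, so your shadow $\mathcal{S}$ contains a tight path on \emph{all} $n$ vertices. Nevertheless $H$ contains no Berge-$P^{(3)}_5$: distinct hyperedges of $H$ meet in at most two vertices, so each window (triple of consecutive core vertices) lies in a unique hyperedge and the window-to-hyperedge assignment is forced; consecutive windows share two core vertices, which must lie in the intersection of their two (distinct, hence loose-path-adjacent) hyperedges; and since the two connector pairs $f_{b-1}\cap f_b$ and $f_b\cap f_{b+1}$ of a hyperedge $f_b$ are disjoint, the core vertex $c_3$, which lies in $(W_1\cap W_2)\cap(W_2\cap W_3)$, gives a contradiction. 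So an arbitrarily long tight path in the shadow coexists with Berge-$P^{(r)}_\ell$-freeness. Correspondingly, any $t$ consecutive windows of that shadow path are covered by only about $t/2$ hyperedges of $H$, so Hall's condition fails with deficiency growing linearly in $t$, and a partial SDR from the defect form of Hall's theorem only hits non-consecutive windows, which do not form a tight path. (Your parenthetical that $\mathcal{S}$ has ``multiplicity controlled by $k$'' is also false: the multiplicity of an $r$-set in $\mathcal{S}$ is the number of hyperedges containing it, which is unbounded --- and that is exactly the problem.)

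Your second route is the right kind of argument, but its central step is unjustified as written. Averaging (``more than $Cn^{r-1}$ edges, hence large average codegree'') produces \emph{one} $(r-1)$-set of large codegree to start from; it says nothing about the codegree of the specific $(r-1)$-set $\{v_{j-r+3},\dots,v_{j+1}\}$ you must extend through after each step, which may lie in only the single hyperedge you just used. (The distinctness bookkeeping you single out as the main obstacle is in fact the easy, bounded part.) The repair is to do the degeneracy deletion globally \emph{before} the greedy: fix a constant $D=D(k,r,\ell)$ exceeding $\ell+\binom{\ell}{k-r+1}$, and repeatedly delete all hyperedges containing an $(r-1)$-set whose codegree is positive but smaller than $D$. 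Each round removes fewer than $D$ hyperedges and permanently kills at least one $(r-1)$-set, so either everything is deleted, in which case $|E(H)|<D\binom{n}{r-1}=O(n^{r-1})$, or a nonempty subhypergraph $H'$ survives in which every $(r-1)$-set contained in a hyperedge has codegree at least $D$; there your greedy extension runs verbatim and produces a Berge-$P^{(r)}_\ell$, a contradiction. With that repair your argument is a correct alternative to the paper's proof, which instead uses the reduction of Gy\H{o}ri and Lemons \cite{GL}: each hyperedge designates exactly one $(r-1)$-subset (a least-picked one), so that large multiplicity propagates along the path automatically, and the Berge distinctness comes for free because each hyperedge is the designated representative of only one window. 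Either mechanism works; the point both address --- and your write-up does not --- is how the codegree/multiplicity lower bound is maintained at \emph{every} step of the greedy construction, not just the first.
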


\begin{proof}
Our proof is inspired by the reduction lemma of Gy\H ori and Lemons \cite{GL}. Let us consider a $k$-uniform $\text{Berge-}P_l^{(r)}$-free hypergraph $H$ and go through its hyperedges in an arbitrary order. For each hyperedge we pick a subset of it of size $r-1$. Among the $\binom{k}{r-1}$ subsets, we pick one which has been picked the least times earlier. We say that the multiplicity $m(A)$ of an $(r-1)$-set $A$ is the number of times it was picked during this process. If at the end of this algorithm every $(r-1)$-set has multiplicity less than $c=(\ell-r+1) \binom{\ell}{k}$, then there are at most $c\binom{n}{r-1}$ hyperedges in $H$ and we are done.

Therefore, we can assume there is an $(r-1)$-set $A_{r-1}=\{v_1,\dots,v_{r-1}\}$ with multiplicity at least $c$. It obtained multiplicity $c$ from a hyperedge $e_{r}$ where each $(r-1)$-set had multiplicity at least $c-1$ (at that point of the algorithm already). In particular there is a vertex $v_{r} \in e_{r}$ such that $A_{r}=\{v_2,\dots,v_r\}$ has multiplicity at least $c-1$. We will find $v_{r+1}, \dots, v_\ell$ similarly. More precisely, we will show by induction on $i$ with $r\leq i\leq\ell$ that after picking $e_i$ and $v_i$, we have that $A_i=\{v_{i-r+2}, \dots, v_i\}$ has multiplicity at least $(\ell-i)\binom{\ell}{k}$. The base step of the induction ($i=r$) follows from our assumption on $A_{r-1}$. Let us assume it holds for $i-1$ -- that is, the multiplicity of $A_{i-1}$ is at least $(\ell-i+1)\binom{\ell}{k}$. Less than $\binom{\ell}{k}>\binom{i-1}{k}$ of the hyperedges containing $A_{i-1}$ are contained in $\{v_1,\dots, v_{i-1}\}$. Thus one of the last $\binom{\ell}{k}$ hyperedges where we picked $A_{i-1}$ contains a new vertex, $v_i$. Let this hyperedge containing $v_i$ be $e_i$. During the algorithm, the addition of $e_i$ increased the multiplicity of $A_{i-1}$ to at least $(\ell-i)\binom{\ell}{k}+1$. This means that $A_i=\{v_{i-r+2},\dots,v_i\}$ must have had multiplicity at least $(\ell-i)\binom{\ell}{k}$ when $e_i$ was added, which is what we wanted to show.

In this way we can choose the edges $e_i$ to build a Berge tight path. Note that as long as $i\leq\ell$, $A_{i-1}$ has multiplicity at least $\binom{\ell}k>\binom{i-1}{k}$, and so a choice of $e_i$ is guaranteed to exist. Thus, the edges $e_{r},e_{r+1},\dots,e_{\ell}$ form a Berge-$P^{(r)}_\ell$.

\end{proof}

\bibliographystyle{plain}
\bibliography{bibv2}

\end{document}